\newtheorem{thm}{Theorem}[section]
\newtheorem{lmm}[thm]{Lemma}
\newtheorem{prp}[thm]{Proposition}
\newtheorem{dfn}[thm]{Definition}
\newtheorem{exa}[thm]{Example}
\newcommand{\N}{{\mathbb N}}
\newcommand{\R}{{\mathbb R}}
\newcommand{\C}{{\mathbb C}}
\newcommand{\Z}{{\mathbb Z}}
\newcommand{\ord}{\mbox{{\rm ord}}}
\newcommand{\Ann}{\mbox{{\rm Ann}}}
\newcommand{\myRe}{{\rm Re\,}\,}
\newcommand{\Fsc}{{\mathcal F}}
\newcommand{\Ksc}{{\mathcal K}}
\newcommand{\Osc}{{\mathcal O}}
\newcommand{\Dsc}{{\mathcal D}}
\newcommand{\Lsc}{{\mathcal L}}
\newcommand{\Ssc}{{\mathcal S}}
\newcommand{\Isc}{{\mathcal I}}
\newcommand{\Nsc}{{\mathcal N}}
\newcommand{\Msc}{{\mathcal M}}
\newcommand{\Char}{\mbox{{\rm Char}}}
\newcommand{\Res}{{\mathrm{Res}}}
\newcommand{\Db}{{\mathcal Db}}
\newenvironment{proof}{\noindent Proof:}{$\Box$}
\title{Operational calculus for holonomic distributions
in the framework of $D$-module theory}
\author{Toshinori Oaku}
\date{}
\begin{document}

\maketitle

\begin{abstract}
Let $f$ be a real polynomial of $x = (x_1,\dots,x_n)$ and $\varphi$ 
be a locally integrable function of $x$ which satisfies a 
holonomic system of linear differential equations. 
We study the distribution $f_+^\lambda\varphi$ with a meromorphic parameter $\lambda$, 
especially its Laurent expansion and integration, 
from an algorithmic viewpoint in the framework of $D$-module theory.
\end{abstract}

\section{Introduction}

Let $f$ be a non-constant real polynomial in $x = (x_1,\dots,x_n)$
and $\varphi$ be a locally integrable function on an open subset $U$ of $\R^n$. 
Then $\varphi$ can be regarded as a distribution (generalized function in the 
sense of L. Schwartz) on $U$. 
We assume that there exists a left ideal $I$ of the ring $D_n$ of differential 
operators with polynomial coefficients in $x$ which annihilates $\varphi$ 
on $U_f := \{x \in U \mid f(x) \neq 0\}$, i.e., $P\varphi$ vanishes on $U_f$ 
for any $P \in I$. Moreover, we assume that $M := D_n/I$ is a holonomic $D_n$-module. 
In this situation, $\varphi$ is called a (locally integrable) holonomic function 
or a holonomic distribution. 

Let us consider the distribution 
$f_+^\lambda\varphi$ on $U$ with a holomorphic parameter $\lambda$. 
This distribution can be analytically extended to a distribution-valued 
meromorphic function of $\lambda$ on the complex plane $\C$. 
Such a distribution was systematically studied by Kashiwara and Kawai 
in $\cite{KK}$ with $f$ being, more generally, a real-valued real analytic function. 
Their investigation was focused on a special case where $M$ has regular singularities 
but most of the arguments work without this assumption. 

The main purpose of this article is to give algorithms to compute
\begin{enumerate}
\item
A holonomic system for the distribution $f_+^{\lambda_0}\varphi$ with $\lambda_0$ 
not being a pole of $f_+^\lambda\varphi$. 
\item
A holonomic system for each coefficient of the 
Laurent series of $f_+^\lambda\varphi$ about an arbitrary point.
\item
Difference equations for the local zeta function 
$Z(\lambda) = \int_{\R^n}f_+^\lambda\varphi\,dx$. 
\end{enumerate}
As was pointed out in \cite{KK}, 
an answer to the first problem provides us with an algorithm 
to compute a holonomic system for the product of two locally $L^2$ holonomic 
functions. Note that the product does not necessarily satisfies 
the tensor product of the two holonomic systems for both functions. 

In Section 2, we review the theoretical properties of $f_+^\lambda\varphi$ 
mostly following Kashiwara \cite{K2} and Kashiwara and Kawai \cite{KK} in the 
analytic category; i.e, under a weaker assumption that $f$ is a real-valued 
real analytic function and that $\varphi$ satisfies a holonomic system of 
linear differential equations with analytic coefficients. 

In Section 3, we give algorithms to computes holonomic systems considered in 
Section 2. As a byproduct, we obtain an algorithm to compute difference equations 
for the local zeta function, which was outlined in \cite{O2}.

\section{Theoretical background}


Let $\DC[n]$ be the sheaf on $\C^n$ of linear partial differential 
operators with holomorphic coefficients, which is generated by 
the derivations $\partial_j = \partial_{x_j} = \partial/\partial x_j$ 
$(j=1,\dots,n)$ over the sheaf $\OC[n]$ of rings of holomorphic functions 
on $\C^n$, with the coordinate system $x = (x_1,\dots,x_n)$ of $\C^n$.
 
We denote by $\Db$ the sheaf on $\R^n$ of the Schwartz distributions. 
Assume that
$f = f(x)$ is a nonzero real-valued real analytic function 
defined on an open connected set $U$ of $\R^n$. 
Let 
$\varphi$ be a locally integrable function on $U$.  
Then $f_+^{\lambda}\varphi$ is also locally integrable on $U$ 
for any $\lambda \in \C$ with $\myRe \lambda \geq 0$, where 
$f_+(x) = \max\{f(x),0\}$.  

Let $\Msc$ be a holonomic  $\DC[n]$-module defined on an 
open set $\Omega$ of $\C^n$ such that $U \subset \Omega \cap \R^n$. 
We say that a distribution $\varphi$ is a solution of $\Msc$ on $U$  
if there exist a section  $u$ of $\Msc$ on $U$ and a $\DC[n]$-linear 
homomorphism $\Phi : \DC[n]u \rightarrow \Db$ defined on $U$ 
such that $\Phi(u) = \varphi$.
As a matter of fact, we have only to assume that $\varphi$ is a solution 
of $\Msc$ on $U_f := \{x \in U \mid f(x) \neq 0\}$ 
and that $\Msc$ is holonomic on $\Omega_f := \{x \in \Omega \mid f(x) \neq 0\}$.

\subsection{Fundamental lemmas}

Under the assumptions above, 
$f_+^\lambda\varphi$ is a $\Db(U)$-valued holomorphic function of $\lambda$  
on the right half-plane
\[
\C_+ := \{\lambda \in \C \mid \myRe\lambda > 0 \}.
\]
In other words, 
let $\Osc\Db$ be the sheaf on $\C \times \R^n \ni (\lambda,x)$
of distributions 
with a holomorphic parameter $\lambda$. 
Then 
$f_+^\lambda\varphi$ belongs to 
\[
\Osc\Db(\C_+ \times U)
= \Bigl\{ v(\lambda,x) \in \Db(\C_+ \times U) \mid \frac{\partial v}{\partial 
\overline{\lambda}} = 0\Bigr\}.
\] 
Let $s$ be an indeterminate corresponding to $\lambda$. 
The following lemma (Lemma 2.9 of \cite{KK}) plays an essential 
role in the following arguments. 

\begin{lmm}[Kashiwara-Kawai \cite{KK}]\label{lemma:fundamental}
Let $\Omega$ be an open set of $\C^n$ such that $V := \R^n \cap \Omega$ 
is non-empty. 
Assume $P(s) \in \Dsc_{\C^n}(\Omega)[s]$ and  
$P(\lambda)(f_+^{\lambda}\varphi) = 0$ 
holds in $\Osc\Db(\C_+ \times V_f)$ with 
$V_f := \{x \in V \mid f(x) \neq 0\}$.  Then 
$P(\lambda)(f_+^{\lambda}\varphi) = 0$ holds 
in $\Osc\Db(\C_+ \times V)$.
\end{lmm}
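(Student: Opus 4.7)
The strategy is to regard $T(\lambda) := P(\lambda)(f_+^\lambda\varphi)$ as a $\Db(V)$-valued holomorphic function on the connected domain $\C_+$, to show that it vanishes on the non-empty open half-plane $\{\myRe\lambda > d\}$ with $d = \ord(P)$, and then to conclude by the identity principle applied scalar-wise to each holomorphic function $\lambda\mapsto\langle T(\lambda),\psi\rangle$ with $\psi\in C_c^\infty(V)$.

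The key step is a cut-off estimate. I would fix $\psi\in C_c^\infty(V)$ supported in a compact $K\subset V$ and introduce a smooth cut-off $\chi_\epsilon$ with $\chi_\epsilon\equiv 1$ on $\{f\geq 2\epsilon\}$, $\chi_\epsilon\equiv 0$ on $\{f\leq\epsilon\}$, and the standard derivative bound $|\partial^\alpha\chi_\epsilon|\leq C_\alpha\epsilon^{-|\alpha|}$ (for instance by setting $\chi_\epsilon(x) = \rho(f(x)/\epsilon)$ with $\rho$ a fixed smooth step function). Since $\chi_\epsilon\psi\in C_c^\infty(V_f)$, the hypothesis yields $\langle T(\lambda),\chi_\epsilon\psi\rangle = 0$; passing to the formal adjoint then gives
\[
\langle T(\lambda),\psi\rangle \;=\; \langle T(\lambda),(1-\chi_\epsilon)\psi\rangle \;=\; \int_{\{0<f\leq 2\epsilon\}\cap K} f^\lambda\,\varphi(x)\cdot P(\lambda)^*\bigl((1-\chi_\epsilon)\psi\bigr)(x)\,dx,
\]
where the reduction of the integration domain uses $\mathrm{supp}(f_+^\lambda)\subset\{f\geq 0\}$ together with $\mathrm{supp}((1-\chi_\epsilon)\psi)\subset\{f\leq 2\epsilon\}$.

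Next I would estimate. On this domain $|f^\lambda|\leq(2\epsilon)^{\myRe\lambda}$, while the worst Leibniz term inside $P(\lambda)^*((1-\chi_\epsilon)\psi)$ contributes at most $O(\epsilon^{-d})$; the $x$- and polynomial-in-$\lambda$ coefficients of $P(\lambda)^*$ are bounded on $K$ for each fixed $\lambda$. Hence
\[
|\langle T(\lambda),\psi\rangle|\;\leq\; C_{\lambda,\psi,K}\,\epsilon^{\myRe\lambda-d}\int_{\{0<f\leq 2\epsilon\}\cap K}|\varphi(x)|\,dx.
\]
Since $\bigcap_{\epsilon>0}\{0<f\leq 2\epsilon\}=\emptyset$, the integral tends to $0$ as $\epsilon\to 0^+$ by dominated convergence with integrable majorant $|\varphi|\chi_K$ (using that $\{f=0\}$ has Lebesgue measure zero because $f$ is a nonzero real analytic function on a connected open set). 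For $\myRe\lambda > d$ the prefactor $\epsilon^{\myRe\lambda-d}$ also tends to $0$, so $\langle T(\lambda),\psi\rangle = 0$ for every such $\lambda$ and every $\psi$, i.e.\ $T(\lambda) = 0$ in $\Db(V)$ on the open half-plane $\{\myRe\lambda > d\}$. The identity principle applied scalar-wise then extends the vanishing to all of $\C_+$.

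The main obstacle is the calibration of the cut-off estimate: the $\epsilon^{-d}$ blow-up coming from derivatives of $\chi_\epsilon$ has to be exactly absorbed by the vanishing $(2\epsilon)^{\myRe\lambda}$ of $f_+^\lambda$ in the shrinking collar around $\{f = 0\}$, which forces the restriction $\myRe\lambda > d$. Once vanishing on this half-plane is secured, the rest is routine dominated convergence on the measure-zero locus $\{f = 0\}$ combined with analytic continuation from a non-empty open subset of $\C_+$.
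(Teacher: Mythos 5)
Your proof is correct and follows essentially the same route as the paper's (which it gives for the slightly more general Lemma 2.2, following Kashiwara--Kawai): a cut-off $\rho(f/\epsilon)$ collaring $\{f=0\}$, reduction to an integral over $\{0<f\le 2\epsilon\}$ via the adjoint, absorption of the $\epsilon^{-\ord P}$ blow-up from the cut-off derivatives by the factor $\epsilon^{\myRe\lambda}$, and analytic continuation from a half-plane $\myRe\lambda \gg 0$. No gaps.
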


Let us generalize this lemma slightly. 
For a positive integer $m$, let us define a section
$f_+^\lambda(\log f_+)^m\varphi$  of the sheaf $\Osc\Db$ on $\C_+ \times U$ by
\[
 \langle f_+^\lambda(\log f_+)^m\varphi, \psi\rangle 
= \int_{\{x \in U \mid f(x) > 0\}} 
\varphi(x)f(x)^\lambda(\log f(x))^m \varphi(x)\psi(x)\,dx 
\quad (\forall \psi \in C_0^\infty(U)),
\]
where $C_0^\infty(U)$ denotes the space of $C^\infty$ functions on $U$ with 
compact supports. 
In fact, $f_+^\lambda(\log  f_+)^m\varphi$ is the $m$-th derivative 
of the distribution $f_+^\lambda\varphi$ with respect to $\lambda$. 

\begin{lmm}\label{lemma:fundamental2}
Let $\Omega$ be an open set of $\C^n$ such that $V := \R^n \cap \Omega$ 
is non-empty. 
Let $\varphi_0,\dots,\varphi_m$ be locally integrable functions on $V$. 
Assume $P_k(s) \in \Dsc_{\C^n}(\Omega)[s]$ ($k=0,1,\dots,m$) and   
\begin{equation}\label{eq:fslog}
\sum_{k=0}^m P_k(\lambda)(f_+^{\lambda}(\log f_+)^k\varphi_k) = 0
\end{equation}
holds in $\Osc\Db(\C_+ \times V_f)$. 
Then (\ref{eq:fslog}) holds in $\Osc\Db(\C_+\times V)$. 
\end{lmm}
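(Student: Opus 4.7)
The plan is to adapt the proof of Lemma~\ref{lemma:fundamental}. Set $T(\lambda) := \sum_{k=0}^m P_k(\lambda)(f_+^{\lambda}(\log f_+)^k\varphi_k) \in \Osc\Db(\C_+ \times V)$; this is holomorphic in $\lambda$ (as a finite sum of such) and, by hypothesis, vanishes on $V_f$, so its support lies in $\C_+ \times \{f = 0\}$. Since $\lambda \mapsto \langle T(\lambda), \psi\rangle$ is holomorphic on the connected open set $\C_+$ for every test function $\psi \in C_0^\infty(V)$, it suffices to prove $T(\lambda) = 0$ on $V$ for $\myRe\lambda$ larger than a threshold $N$ (depending on $m$ and on the orders of the $P_k$); the identity theorem then propagates the vanishing to all of $\C_+$.

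The key regularity input is that for $\myRe\lambda$ sufficiently large, each function $f_+^{\lambda}(\log f_+)^k$ belongs to $C^N(V)$, with every partial derivative up to order $N$ vanishing on $\{f = 0\}$. A direct Leibniz computation expresses such a partial derivative as a finite sum of terms of the form $f_+^{\lambda - j}(\log f_+)^\ell \cdot Q(x)$ with $j \leq N$, $\ell \leq k$, and $Q$ polynomial in the partial derivatives of $f$. Each such term is bounded in absolute value by $C f_+^{\myRe\lambda - j - \varepsilon}$ near $\{f = 0\}$ for any $\varepsilon > 0$: the logarithmic factor is absorbed by any positive power of $f_+$, and the whole function extends continuously to zero across $\{f = 0\}$.

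With this regularity in hand, I would complete the argument as in Lemma~\ref{lemma:fundamental}. Given $\psi \in C_0^\infty(V)$, fix a family of smooth cutoffs $\chi_\varepsilon$ of $\{f = 0\}$ such that $\psi\chi_\varepsilon \in C_0^\infty(V_f)$; the hypothesis yields $\langle T(\lambda), \psi\chi_\varepsilon\rangle = 0$, and the $C^N$-regularity together with the vanishing of $f_+^{\lambda}(\log f_+)^k$ at $\{f = 0\}$ dominates the growing derivatives of $\chi_\varepsilon$ as $\varepsilon \to 0$, giving $\langle T(\lambda), \psi\rangle = 0$ for $\myRe\lambda > N$, and then by holomorphic continuation for all $\lambda \in \C_+$.

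The main obstacle will be precisely this last step: since $\varphi_k$ is only locally integrable, the Leibniz expansion of $P_k(\lambda)(f_+^{\lambda}(\log f_+)^k\varphi_k)$ produces distributional derivatives of $\varphi_k$ paired against derivatives of $\psi\chi_\varepsilon$, and controlling these pairings is exactly what forces the $C^N$-regularity with $N$ large. This is the same difficulty already addressed in the proof of Lemma~\ref{lemma:fundamental}; the presence of the additional factor $(\log f_+)^k$ here only modifies the choice of threshold $N$, not the structure of the argument.
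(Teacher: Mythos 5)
Your proposal follows the same route as the paper's proof: truncate the test function near $\{f=0\}$, use the hypothesis on $V_f$, and balance the $O(\tau^{-m_k})$ growth of the adjoint operator applied to the cutoff against the decay of $f_+^{\lambda}(\log f_+)^k$ near $\{f=0\}$ for $\myRe\lambda$ large (absorbing the logarithm into a small power of $f_+$, exactly as in the paper's inequality $|\log t|^k\le k!\,t^{-1}$), then conclude by analytic continuation. This is essentially the paper's own adaptation of the Kashiwara--Kawai argument, so the approach is correct and not genuinely different.
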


\begin{proof}
We follow the argument of the proof of Lemma 2.9 in \cite{KK}. 
Let $\phi$ belong to $C^\infty_0(V)$ with $K := \mathrm{supp}\, \phi$. 
Let $\chi(t)$ be a $C^\infty$ function of a variable $t$ 
such that $\chi(t) = 1$ for $|t| \leq 1/2$ and $\chi(t) = 0$ for $|t| \geq 1$. 
Then we have
\begin{align*}
\left\langle
\sum_{k=0}^m P_k(\lambda)(f_+^{\lambda}(\log f_+)^k\varphi_k),\,\,\phi\right\rangle
&= 
\left\langle\sum_{k=0}^m  P_k(\lambda)(f_+^{\lambda}(\log f_+)^k\varphi_k),\,\,
\chi\bigl(\frac{f}{\tau}\bigr) \phi\right\rangle
\\
&=
\sum_{k=0}^m \int_V f_+^{\lambda}(\log f_+)^k\varphi_k {}^tP_k(\lambda)
\Bigl(\chi\bigl(\frac{f}{\tau}\bigr)\phi\Bigr)\,dx
\end{align*}
for any $\tau > 0$, where ${}^tP_k(\lambda)$ denotes the adjoint operator of 
$P_k(\lambda)$. 
Let $m_k$ be the order of $P_k(s)$ and $d_k$ be the degree of $P_k(s)$ in $s$. 
Then there exist constants $C_k$ such that
\[
\sup_{x \in K}\left|{}^tP_k(\lambda)\Bigl(\chi\bigl(\frac{f(x)}{\tau}\bigr)\phi(x)\Bigr)\right|
\leq C_k(1+|\lambda|)^{d_k}\tau^{-m_k} 
\qquad (0 < \forall \tau < 1).
\]
Assume $\myRe\lambda > \max\{m_k+1 \mid 0 \leq k \leq m\}$ 
and $0 < \tau < 1$. Then we have
\begin{align*}
&
\left|\int_V f_+^{\lambda}(\log f_+)^k\varphi_k {}^tP_k(\lambda)
\Bigl(\chi\bigl(\frac{f}{\tau}\bigr)\phi\Bigr)\,dx \right| 
\\&
\leq C_k(1+|\lambda|)^{d_k}\tau^{-m_k} 
\int_{\{x\in V | 0 < f(x) \leq \tau\}} |f_+^{\lambda}(\log f_+)^k\varphi_k(x)|\,dx 
\\&
\leq k!C_k(1+|\lambda|)^{d_k}\tau^{\myRe\lambda - m_k - 1}
\int_{\{x\in V | 0 < f(x) \leq \tau\}} |\varphi_k(x)|\,dx
\end{align*}
since $|\log t|^k \leq k! t^{-1}$ holds for $0 < t < 1$.  
This implies
\begin{align*}
\left\langle
\sum_{k=0}^m P_k(\lambda)(f_+^{\lambda}(\log f_+)^k\varphi_k),\,\,\phi\right\rangle
&=
\lim_{\tau\rightarrow +0}
\sum_{k=0}^m \int_V f_+^{\lambda}(\log f_+)^k\varphi_k {}^tP_k(\lambda)
\Bigl(\chi\bigl(\frac{f}{\tau}\bigr)\phi\Bigr)\,dx
=0.
\end{align*}
The assertion of the lemma follows from the uniqueness of analytic 
continuation.
\end{proof}

\subsection{Generalized $b$-function and analytic continuation}

We assume that there exists on $\Omega$ 
a sheaf $\Isc$ of coherent left ideals of 
$\DC[n]$ which annihilates $\varphi$ on $U_f = \{x \in U \mid f(x) \neq 0\}$, 
namely, $P\varphi=0$ holds on $W \cap U_f$ for any section $P$ of $\Isc$ on 
an open set $W$ of $\C^n$. 
We set $\Msc = \DC[n]/\Isc$ and denote by $u$ the residue class of 
$1 \in \Dsc_X$ modulo $\Isc$. 
In the sequel, we assume that $\Msc$ is holonomic on 
$\Omega_f = \{z \in \Omega \mid f(z) \neq 0\}$, i.e., 
that $\Char(\Msc)\cap \pi^{-1}(\Omega_f)$ is of dimension $n$,
where $\Char(\Msc)$ denotes the characteristic variety of $\Msc$ 
and $\pi :T^*\C^n \rightarrow \C^n$ is the canonical projection. 

Let 
$\Lsc = \OC[n][f^{-1},s]f^s$
be the free $\OC[n][f^{-1},s]$-module generated by the symbol $f^s$. 
Then $\Lsc$ has a natural structure of left $\DC[n][s]$-module
induced by the derivation 
$\partial_i f^s = s(\partial f/\partial x_i)f^{-1}f^s$.  
Let us consider the tensor product $\Lsc \otimes_{\OC[n]} \Msc$ 
of $\OC[n]$-modules, which has a natural structure of left $\DC[n][s]$-module. 

\begin{lmm}\label{lemma:saturation}
Let $v$ and $P(s)$ be sections of $\Msc$ and of $\DC[n][s]$ 
respectively on an open subset of $\Omega$. Then 
$P(s)(f^s\otimes v) = 0$ holds in $\Lsc\otimes_{\OC[n]}\Msc$
if and only if 
$(f^{m-s}P(s)f^s)(1\otimes v) = 0$ holds in $\C[s]\otimes_\C \Msc$ 
for a sufficiently large $m \in \N$. 
\end{lmm}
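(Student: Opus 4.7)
The plan is to reduce the tensor-product vanishing statement to the vanishing of a suitably conjugated operator acting on $v$, and then to address an $f$-torsion gap between the localized and unlocalized modules. First I would establish the key conjugation identity
\[
P(s)(f^s\otimes v) \;=\; f^s\otimes \tilde P(s)\,v
\qquad\text{in }\Lsc\otimes_{\OC[n]}\Msc,
\]
where $\tilde P(s) := f^{-s}P(s)f^s$ is a priori an element of $\DC[n][f^{-1},s]$. The identity is verified on generators of $\DC[n][s]$: for $P = a(x,s)\in\OC[n][s]$ it is trivial, while for $P = \partial_i$ it reduces to $\partial_i(f^s\otimes v) = f^s\otimes\bigl(\partial_i + s(\partial_i f)f^{-1}\bigr)v$, which follows from the Leibniz rule for the derivation $\partial_i$ on the tensor product over $\OC[n]$ together with the module structure $\partial_i f^s = s(\partial_i f)f^{-1}f^s$ on $\Lsc$. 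The identity extends to arbitrary $P(s)$ by composition, since conjugation by $f^s$ respects products. Because $P(s)$ has finite order, there exists $m_0\in\N$ such that $f^{m-s}P(s)f^s = f^{m}\tilde P(s)$ lies in $\DC[n][s]$ for every $m\geq m_0$.

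Since $\Lsc$ is free of rank one over $\OC[n][f^{-1},s]$ on the generator $f^s$, the tensor product $\Lsc\otimes_{\OC[n]}\Msc$ is naturally identified with the $f$-localization of $\C[s]\otimes_\C\Msc$, with $f^s$ as a formal marker. Under this identification, $P(s)(f^s\otimes v)=0$ is equivalent to $\tilde P(s)\,v=0$ in the localization, and (since $f$ acts invertibly there) this is in turn equivalent to $(f^{m-s}P(s)f^s)\,v=0$ in the localization for any $m\geq m_0$.

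The main obstacle is strengthening this to vanishing in $\C[s]\otimes_\C\Msc$ itself, as demanded by the lemma, rather than merely in its $f$-localization. The kernel of the canonical localization map $\C[s]\otimes_\C\Msc \to \Lsc\otimes_{\OC[n]}\Msc$ is precisely the $f$-torsion submodule, so if $(f^{m_0-s}P(s)f^s)(1\otimes v)$ vanishes in the localization, there exists $k\in\N$ with $f^k(f^{m_0-s}P(s)f^s)(1\otimes v)=0$ in $\C[s]\otimes_\C\Msc$. Taking $m := m_0+k$ then gives $(f^{m-s}P(s)f^s)(1\otimes v)=0$ as required. The converse direction is immediate: if $(f^{m-s}P(s)f^s)(1\otimes v)=0$ in $\C[s]\otimes_\C\Msc$, then its image $f^m\tilde P(s)v$ in the localization is zero, invertibility of $f$ forces $\tilde P(s)v=0$, and the conjugation identity yields $P(s)(f^s\otimes v)=0$.
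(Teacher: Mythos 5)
Your proposal is correct and follows essentially the same route as the paper: the conjugation identity $P(s)(f^s\otimes v)=f^{s-m}\otimes(f^{m-s}P(s)f^s)v$, the identification of $\Lsc\otimes_{\OC[n]}\Msc$ with the $f$-localization of $\C[s]\otimes_\C\Msc$, and the fact that the kernel of the localization map is the $f$-torsion. The only difference is that the paper proves this last fact explicitly via the presentation $\OC[n][s,t]\to\OC[n][s,f^{-1}]$, whereas you invoke it as standard, which is fine.
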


\begin{proof}
Set $\Msc[s] = \C[s]\otimes_\C \Msc$, which has a natural structure of 
left module over $\C[s]\otimes_\C\DC[n] = \DC[n][s]$.  Then we have 
$\Lsc\otimes_{\OC[n]}\Msc = \Lsc\otimes_{\OC[n][s]}\Msc[s]$ as 
left $\DC[n][s]$-module. 
Let $v$ be a section of $\Msc[s]$. 
Since $\Lsc$ is isomorphic to $\OC[n][f^{-1},s]$ as $\OC[n][s]$-module, 
$f^s\otimes v$ vanishes in $\Lsc\otimes_{\OC[n][s]}\Msc[s]$ if and only if 
$1\otimes v$ vanishes in $\OC[n][f^{-1},s]\otimes_{\OC[n][s]}\Msc[s]$. 
First, let us show that this happens if and only if $f^mv=0$ in $\Msc[s]$ 
with some $m \in \N$. 

Let $\rho : \OC[n][s,t] \rightarrow \OC[n][s,f^{-1}]$ be the 
homomorphism defined by $\rho(h(s,t)) = h(s,f^{-1})$ for 
$h(s,t) \in \OC[n][s,t]$. Let $\Ksc$ be the kernel of $\rho$. 
Then we have an exact sequence
\[
\Ksc\otimes_{\OC[n][s]}\Msc[s] \longrightarrow
\OC[n][s,t]\otimes_{\OC[n][s]}\Msc[s] 
\stackrel{\rho\otimes\mathrm{id}}{\longrightarrow} 
\OC[n][s,f^{-1}]\otimes_{\OC[n][s]}\Msc[s] 
\longrightarrow 0. 
\] 
Hence $1\otimes v$ vanishes in $\OC[n][s,f^{-1}]\otimes_{\OC[n][s]}\Msc[s]$ 
if and only if there exists $h(s,t) = \sum_{k=0}^m h_k(s)t^k\in \Ksc$ such that 
$1\otimes v = h(s,t)\otimes v$ holds in $\OC[n][s,t]\otimes_{\OC[n][s]}\Msc[s]$, 
which is equivalent to $h_k(s)v = \delta_{0k}v$ ($k=0,1,\dots,m$)   
since $\OC[n][s,t]$ is free over $\OC[n][s]$. 
On the other hand, $\sum_{k=0}^m h_k(s)f^{-k} = \rho(h(s,t)) = 0$ implies
\[
0 =  f^mh_0(s)v + f^{m-1}h_1(s)v + \cdots + fh_{m-1}(s)v + h_m(s)v = f^mv.  
\]
Conversely, if $f^mv=0$ for some $m \in \N$, then we have 
$1\otimes v = f^{-m}\otimes f^m v=0$ in $\OC[n][s,f^{-1}]\otimes_{\OC[n][s]}\Msc$. 

Let $P(s)$ be a section of $\DC[n][s]$ of order $m$. 
For $i=1,\dots,n$, 
\[
\partial_i(f^s\otimes v) = f^{s-1}\otimes (sf_i + f\partial_i)v
= f^{s-1}\otimes(f^{1-s}\partial_if^s)v
\]
holds in $\Lsc\otimes_{\OC[n][s]}\Msc[s]$ 
with $f_i = \partial f/\partial x_i$. 
This allows us to show that
\[
P(s)(f^s\otimes v) = f^{s-m}\otimes (f^{m-s}P(s)f^s)v
\]
holds in $\Lsc\otimes_{\OC[n][s]}\Msc[s]$. 
(Note that $f^{m-s}P(s)f^s$ belongs to $\DC[n][s]$.) 
Summing up, we have shown that $P(s)(f^s\otimes v)$ vanishes 
in $\Lsc\otimes_{\OC[n][s]}\Msc[s]$ if and only if 
$(f^{l-s}P(s)f^s)v$ vanishes in $\Msc[s]$ for some $l \geq m$. 
\end{proof}

Lemma \ref{lemma:saturation} with $P(s) = 1$ immediately implies

\begin{prp}
Let $\Msc[f^{-1}] := \OC[n][f^{-1}]\otimes_{\OC[n]}\Msc$ be the 
localization of $\Msc$ with respect to $f$, which has a natural 
structure of left $\DC[n]$-module. 
Then the natural homomorphism 
$\Lsc\otimes_{\OC[n]}\Msc \rightarrow \Lsc\otimes_{\OC[n]}\Msc[f^{-1}]$ 
is injective. 
\end{prp}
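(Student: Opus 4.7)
The plan is to reduce the injectivity statement to the vanishing of simple tensors of the form $f^s \otimes w$, and then apply Lemma \ref{lemma:saturation} with $P(s) = 1$ to both $\Msc$ and its localization $\Msc[f^{-1}]$.

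First I would exploit the identification $\Lsc \otimes_{\OC[n]} \Msc = \Lsc \otimes_{\OC[n][s]} \Msc[s]$ established inside the proof of Lemma \ref{lemma:saturation}, together with the fact that $\Lsc$ is an $\OC[n][s,f^{-1}]$-module, so that multiplication by any power $f^N$ is bijective on both $\Lsc \otimes_{\OC[n]} \Msc$ and $\Lsc \otimes_{\OC[n]} \Msc[f^{-1}]$. Given $\xi = \sum_i g_i(x,s) f^s \otimes v_i$ in the kernel of the natural map, I would choose $N$ so large that each $f^N g_i$ lies in $\OC[n][s]$, and then pull these polynomial factors across the tensor product over $\OC[n][s]$ to rewrite $f^N \xi = f^s \otimes w$ for a single $w \in \Msc[s]$.

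Next, since $\xi$ maps to zero, so does $f^N \xi$, and therefore $f^s \otimes \bar w = 0$ in $\Lsc \otimes_{\OC[n]} \Msc[f^{-1}]$, where $\bar w$ denotes the image of $w$ in $\Msc[f^{-1}][s]$. The proof of Lemma \ref{lemma:saturation} is formal enough to apply with $\Msc[f^{-1}]$ in place of $\Msc$, so with $P(s) = 1$ it gives $f^m \bar w = 0$ in $\Msc[f^{-1}][s]$ for some $m$; since $f$ acts invertibly on $\Msc[f^{-1}]$ this forces $\bar w = 0$, and then, decomposing $w = \sum_k w_k s^k$, each coefficient $w_k$ is $f$-torsion in $\Msc$, so a common power gives $f^M w = 0$ in $\Msc[s]$.

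Finally, applying Lemma \ref{lemma:saturation} with $P(s)=1$ to $\Msc$ in the reverse direction, $f^M w = 0$ yields $f^s \otimes w = 0$ in $\Lsc \otimes_{\OC[n]} \Msc$, i.e.\ $f^N \xi = 0$, whence $\xi = 0$ by invertibility of $f^N$. The only mild obstacle I anticipate is the bookkeeping in the reduction step: passing from an arbitrary element of the tensor product to a single simple tensor $f^s \otimes w$ relies crucially on the $\OC[n][s]$-linearity (not merely $\OC[n]$-linearity) of the tensor, which is exactly what the identification $\Lsc \otimes_{\OC[n]} \Msc = \Lsc \otimes_{\OC[n][s]} \Msc[s]$ provides.
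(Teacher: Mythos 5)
Your argument is correct and is exactly the intended one: the paper derives this proposition in one line as "Lemma \ref{lemma:saturation} with $P(s)=1$ immediately implies", and your write-up simply fills in the routine reduction (clearing denominators via the $\OC[n][f^{-1},s]$-module structure to reduce to a single tensor $f^s\otimes w$ with $w\in\Msc[s]$, then applying the vanishing criterion $f^m w=0$ on both sides). No gaps; the only point worth noting explicitly, which you do, is that the relevant part of the lemma's proof applies verbatim to $\Msc[f^{-1}]$ and to sections of $\Msc[s]$ rather than just $\Msc$.
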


\begin{prp}\label{prop:fundamental}
Let $P(s)$ be a section of $\DC[n][s]$ on an open set $\Omega$ of $\C^n$ and suppose 
$P(s)(f^s\otimes u) = 0$ in $\Lsc\otimes_{\OC[n]}\Msc$. 
Set $V = U \cap \Omega$. 
Then $P(\lambda)(f_+^\lambda\varphi) = 0$ 
holds in $\Osc\Db(\C_+ \times V)$.
\end{prp}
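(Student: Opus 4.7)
The plan is a two-step reduction: first establish $P(\lambda)(f_+^\lambda \varphi) = 0$ on the open set $V_f = \{x \in V \mid f(x) \neq 0\}$, where $f_+^\lambda$ is given by an honest function away from the zero locus of $f$, and then invoke Lemma \ref{lemma:fundamental} to extend the vanishing across $\{f = 0\}$ to all of $\C_+ \times V$.

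To realize the first step I would apply Lemma \ref{lemma:saturation} to the hypothesis: there exist $m \in \N$ and $Q(s) := f^{m-s} P(s) f^s \in \DC[n][s]$ such that $Q(s)(1 \otimes u) = 0$ in $\Msc[s] = \C[s] \otimes_\C \Msc$. Expanding $Q(s) = \sum_k s^k Q_k$ with $Q_k \in \DC[n]$, the fact that $\{s^k\}$ is a $\C$-basis of $\C[s]$ forces each $Q_k u = 0$ in $\Msc$, so $Q_k \in \Isc$ and hence $Q_k \varphi = 0$ on $U_f$; therefore $Q(\lambda)\varphi = 0$ on $U_f$ for every $\lambda \in \C$. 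Now decompose $V_f = U_+ \sqcup U_-$ with $U_\pm := \{x \in V \mid \pm f(x) > 0\}$. On $U_-$ the distribution $f_+^\lambda \varphi$ vanishes identically. On $U_+$ the function $f$ is a positive real analytic function, so $f^\lambda$ is a smooth multiplier (holomorphic in $\lambda \in \C_+$); applying $P(\lambda)$ and iterating the Leibniz rule yields the distributional identity
\[
P(\lambda)(f_+^\lambda \varphi) = P(\lambda)(f^\lambda \varphi) = f^{\lambda - m} Q(\lambda) \varphi = 0
\]
on $U_+$. Combined, this gives $P(\lambda)(f_+^\lambda \varphi) = 0$ in $\Osc\Db(\C_+ \times V_f)$, and Lemma \ref{lemma:fundamental} immediately promotes the vanishing to all of $\C_+ \times V$.

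The main obstacle is justifying the middle equality $P(\lambda)(f^\lambda \varphi) = f^{\lambda - m} Q(\lambda) \varphi$ as a genuine distributional identity on $U_+$: the analogous identity $P(s)(f^s \otimes u) = f^{s-m} \otimes Q(s) u$ was derived formally inside $\Lsc \otimes_{\OC[n]} \Msc$ in the proof of Lemma \ref{lemma:saturation}, and I need to ensure that the same manipulation transfers to honest distributions. The key point is that the derivation uses only the single commutation $\partial_i \circ f^s = f^s \circ (s f_i f^{-1} + \partial_i)$, which on $U_+$ coincides with the ordinary Leibniz rule for the smooth positive multiplier $f^\lambda$ acting on the distribution $\varphi$; since $f > 0$ throughout $U_+$, no division by zero is incurred and the formal computation agrees term by term with the distributional one.
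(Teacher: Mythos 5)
Your proof is correct, and its skeleton is the same as the paper's: first establish $P(\lambda)(f_+^\lambda\varphi)=0$ on $\C_+\times V_f$, then push the vanishing across $\{f=0\}$ with Lemma \ref{lemma:fundamental}. The difference lies in how the first step is carried out. The paper builds a $\DC[n]$-linear homomorphism $\Psi:\Lsc\otimes_{\OC[n]}\Msc\to\Osc_{+\infty}\Db$ on $V_f$, sending $a(s)f^{s-m}\otimes Pu$ to $a(\lambda)f_+^{\lambda-m}P\varphi$, and simply applies $\Psi$ to the relation $P(s)(f^s\otimes u)=0$; the price is the auxiliary sheaf $\Osc_{+\infty}\Db$ of distributions holomorphic on some right half-plane and a closing analytic-continuation remark. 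You instead route through Lemma \ref{lemma:saturation}: write $Q(s)=f^{m-s}P(s)f^s$, observe that $Q(s)(1\otimes u)=0$ in $\C[s]\otimes_\C\Msc$ forces each coefficient $Q_k$ into $\Isc$, hence $Q(\lambda)\varphi=0$ on $U_f$, and then undo the conjugation on $U_+$ where $f^\lambda$ is an honest real-analytic multiplier ($U_-$ being trivial). Both arguments rest on the identical key fact --- off $\{f=0\}$ the formal rule $\partial_i f^s=f^s(\partial_i+sf_if^{-1})$ is literally the Leibniz rule for the function $f_+^\lambda$ --- and this is exactly the well-definedness/$\DC[n]$-linearity check the paper must perform for $\Psi$; the point you flag as the ``main obstacle'' addresses precisely this, and your justification is adequate. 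Your version is more elementary and self-contained (no $\Osc_{+\infty}\Db$), while the paper's $\Psi$ yields slightly more: it exhibits $f_+^\lambda\varphi$ as a solution of the whole module on $V_f$, not just of the single relation at hand.
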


\begin{proof}
Let $\Osc_{+\infty}\Db$ be the sheaf on $\R^n$ associated with 
the presheaf 
\[
W \longmapsto  \lim_{\longrightarrow} 
\Osc\Db(\{\lambda\in\C \mid \myRe\lambda > a\} \times W)
\]
for every open set $W$ of $\R^n$,
where the inductive limit is taken as $a \rightarrow \infty$.  
The $\C$-bilinear sheaf homomorphism 
\[
\Lsc  \times \Msc \ni 
(a(s)f^{s-m}, Pu) \longmapsto (a(\lambda)f_+^{\lambda-m})P\varphi 
\in \Osc_{+\infty}\Db
\]
with $a(s) \in \Osc_X[s]$, $m \in \N$, $P \in \Dsc_X$, 
which is well-defined and $\OC[n]$-balanced on $V_f$  
since $f_+^{\lambda-m}$ is real analytic there, 
induces a $\DC[n]$-linear homomorphism
\[
\Psi :\,\Lsc \otimes_{\OC[n]}\Msc \longrightarrow \Osc_{+\infty}\Db
\]
on $V_f$
such that $\Psi(a(s)f^{s-m}\otimes Pu) = a(\lambda)f_+^{\lambda-m}P\varphi$. 
In particular, 
if $P(s) \in \DC[n][s]$ satisfies 
$P(s)(f^s\otimes u) = 0$ in $\Lsc\otimes_{\OC[n]}\Msc$, then
$P(\lambda)(f_+^\lambda \varphi) = 0$ holds in $\Osc_{+\infty}\Db(V_f)$, 
hence also in $\Osc_{+\infty}\Db(V)$ by Lemma \ref{lemma:fundamental}. 
Since $f_+^\lambda\varphi$ belongs to 
$\Osc\Db(\C_+ \times V)$, 
it follows that $P(f_+^\lambda\varphi) = 0$ holds in $\Osc\Db(\C_+ \times V)$. 
This completes the proof.
\end{proof}

Kashiwara proved in \cite{K2} (Theorem 2.7) that 
on a neighborhood of each point $p$ of $\Omega$,  
there exist nonzero $b(s) \in \C[s]$ and $P(s) \in \DC[n][s]$ such that
\[
P(s) (f^{s+1} \otimes u) = b(s)f^s\otimes u 
\quad\mbox{in $\Lsc\otimes_{\OC[n]}\Msc$}. 
\]
Such $b(s)$ of the smallest degree $b(s) = b_p(s)$ is called the (generalized) 
$b$-function for $f$ and $u$ at $p$. 

Assume $p \in U$. Then 
by the proposition above, 
\[
P(\lambda)(f_+^{\lambda+1}\varphi) = b(\lambda)f_+^\lambda\varphi
\]
holds in $\Osc\Db(\C_+ \times V)$ with an open neighborhodd $V$ of $p$. 
It follows that $f_+^\lambda\varphi$ is a $\Db(V)$-valued 
meromorphic function of $\lambda$ on $\C$. 
It is easy to see that we can replace $V$ by an arbitrary relatively compact 
subset of $U$. 
The poles of $f_+^\lambda\varphi$ are contained in  
\[
\{\lambda -k \mid b_p(\lambda) =0\,\,(\exists p \in V),\, k \in \N\}.
\]

\begin{prp}[Lemma 2.10 of \cite{KK}] \label{prop:roots}
There exists a positive real number $\varepsilon$ such that 
$f_+^\lambda\varphi$ belongs to 
$\Osc\Db(\{\lambda \in \C \mid \myRe\lambda > -\varepsilon\} \times U)$. 
\end{prp}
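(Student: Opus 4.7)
The plan is to use the generalized $b$-function together with Proposition~\ref{prop:fundamental} to continue $f_+^\lambda\varphi$ meromorphically past the line $\myRe\lambda=0$, and then rule out any actual poles in a thin strip $-\varepsilon<\myRe\lambda\leq 0$.

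First, I would fix a point $p\in U$ and a neighborhood $V$ on which the distributional identity $P(\lambda)f_+^{\lambda+1}\varphi = b_p(\lambda) f_+^\lambda\varphi$ holds in $\Osc\Db(\C_+\times V)$, with $P(s)\in\DC[n][s]$ and $b_p(s)\in\C[s]\setminus\{0\}$; this is furnished by Kashiwara's theorem combined with Proposition~\ref{prop:fundamental}. Because $f_+^{\lambda+1}\varphi$ is already holomorphic on the larger half-plane $\myRe\lambda>-1$ (the defining integral converges there against any compactly supported test function), dividing by $b_p(\lambda)$ produces a meromorphic continuation of $f_+^\lambda\varphi$ to $\{\myRe\lambda>-1\}\times V$, with possible poles only at the finite set of zeros of $b_p$.

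The key step is to exclude poles on the closed half-plane $\{\myRe\lambda\geq 0\}$. For each $\psi\in C_0^\infty(V)$ the pairing $\langle f_+^\lambda\varphi,\psi\rangle = \int_{\{f>0\}} f^\lambda\varphi\psi\,dx$ is defined on the closed half-plane and continuous there by dominated convergence: on $\mathrm{supp}\,\psi$ the integrand is bounded in absolute value by a locally uniform multiple of $|\varphi\psi|$, which is integrable. By unique analytic continuation this continuous function agrees with the meromorphic extension constructed above, so a pole at any $\lambda_0$ with $\myRe\lambda_0\geq 0$ would force the pairing to blow up as $\lambda\to\lambda_0$, contradicting continuity. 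Hence all potential poles of $f_+^\lambda\varphi$ lie strictly in $\myRe\lambda<0$, and the finiteness of the zero set of $b_p$ then yields a local $\varepsilon_p>0$ on which $f_+^\lambda\varphi$ is holomorphic over $\{\myRe\lambda>-\varepsilon_p\}\times V$.

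Finally, to upgrade the local $\varepsilon_p$'s to a single $\varepsilon>0$ valid on all of $U$, I would cover each relatively compact open $U'\subset U$ by finitely many such neighborhoods $V$ and take the l.c.m.\ of the corresponding $b_p$'s, obtaining a polynomial on $U'$ whose finitely many zeros supply a uniform $\varepsilon_{U'}>0$; one then exhausts $U$ by such $U'$. I expect this last uniformity step to be the most delicate point of the argument, since it requires controlling the real parts of the zeros of the local $b$-functions uniformly as $p$ ranges over $U$; in contrast, the meromorphic continuation and the continuity-based exclusion of poles on the imaginary axis are comparatively mechanical given the machinery already in hand.
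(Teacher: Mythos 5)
Your central step --- excluding poles $\lambda_0$ with $\myRe\lambda_0\geq 0$ by noting that $\langle f_+^\lambda\varphi,\psi\rangle$ is defined and continuous on the closed half-plane $\{\myRe\lambda\geq 0\}$ (dominated convergence on $\mathrm{supp}\,\psi$) and therefore cannot blow up as $\lambda\to\lambda_0$ from the right --- is exactly the paper's argument, and it is correct. The meromorphic continuation to $\myRe\lambda>-1$ via the functional equation $P(\lambda)f_+^{\lambda+1}\varphi=b_p(\lambda)f_+^\lambda\varphi$ likewise matches the discussion preceding the proposition.

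The gap is in your final step, and you have correctly located it yourself without closing it. Covering a relatively compact $U'\subset U$ by finitely many neighborhoods and taking the l.c.m.\ of the local $b$-functions does yield an $\varepsilon_{U'}>0$ such that $f_+^\lambda\varphi$ is holomorphic on $\{\myRe\lambda>-\varepsilon_{U'}\}\times U'$; but ``exhausting $U$ by such $U'$'' proves nothing more, because $\varepsilon_{U'}$ may a priori shrink to $0$ as $U'$ grows, whereas the proposition asserts a single $\varepsilon$ valid on all of $U$. The ingredient the paper invokes here (following Kashiwara--Kawai) is that $f_+^\lambda\varphi$ has only \emph{finitely many} poles in $\{\myRe\lambda>-1\}$ globally on $U$; combined with your exclusion of poles from $\{\myRe\lambda\geq 0\}$, one may then take $\varepsilon=\min\bigl(1,\min_j(-\myRe\lambda_j)\bigr)$ over that finite set of poles $\lambda_j$. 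Your proposal neither states nor proves this global finiteness, so as written it establishes the conclusion only on relatively compact subsets of $U$. (In the algebraic setting of Section~3 the finiteness is immediate from the existence of a global $b$-function; in the analytic setting it is a genuine assertion about the family of local $b$-functions $b_p$ as $p$ ranges over $U$, which is what the cited lemma of Kashiwara--Kawai supplies.)
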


\begin{proof}
Let $\lambda_0$ be an arbitrary pole of $f_+^\lambda\varphi$. 
There exists $\psi \in C_0^\infty(U)$ such that $\lambda_0$ is 
a pole of $Z(\lambda):= \langle f_+^\lambda\varphi,\psi\rangle$. 
In particular, $|Z(\lambda_0+t)|$ tends to infinity as $t \rightarrow +0$. 
On the other hand, $Z(\lambda)$ is continuous on 
$\{\lambda \in \C \mid \myRe\lambda \geq 0\}$. 
This implies $\myRe \lambda_0 < 0$. 
The conclusion follows since there are at most a finite number of 
poles of $f_+^\lambda\varphi$ in the set 
$\{\lambda \in \C | \myRe\lambda > -1\}$. 
\end{proof}

In conclusion, $f_+^\lambda\varphi$ is a $\Db(U)$-valued meromorphic 
function on $\C$ whose poles are contained in 
$\{\lambda \in \C \mid \myRe\lambda < 0\}$.

\subsection{Holonomicity of $f_+^\lambda\varphi$ and its applications}

Let $f$, $\varphi$, $\Msc = \DC[n]/\Isc$ be as in the preceding subsection. 
Let $\Nsc = \DC[n][s](f^s\otimes u)$ be the left 
$\DC[n][s]$-submodule of $\Lsc\otimes_{\OC[n]}\Msc$ 
generated by $f^s\otimes u$. 
Theorem 2.5 of Kashiwara \cite{K2} guarantees that 
$\Nsc_{\lambda_0} := \Nsc/(s-\lambda_0)\Nsc$ 
is a holonomic $\DC[n]$-module on $\Omega$ for any $\lambda_0 \in \C$.

\begin{prp}
Let $\lambda_0$ be an arbitrary complex number
and $f^{\lambda_0}\otimes\varphi$ the residue class of 
$f^s\otimes u \in \Nsc$ modulo $(s-\lambda_0)\Nsc$.  
\begin{enumerate}
\item
$\Nsc_0$ is isomorphic to $\Msc$ as $\DC[n]$-module on $\Omega_f$. 
\item
If $\Msc$ is $f$-saturated, i.e., if $fv = 0$ with $v \in \Msc$ 
implies $v= 0$, then
there is a surjective $\DC[n]$-homomorphism 
$\Phi : \Nsc_0 \rightarrow \Msc$ on $\Omega$ 
such that $\Phi(f^0\otimes u) = u$. 
Moreover, $\Phi$ is an isomorphism on $\Omega_f$. 
\end{enumerate}
\end{prp}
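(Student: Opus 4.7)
The plan is to construct $\Phi$ as a natural ``specialization at $s=0$'' homomorphism and, for the isomorphism assertion on $\Omega_f$, to exhibit an explicit inverse. The starting observation is that $\Lsc/s\Lsc\cong \OC[n][f^{-1}]$ as $\DC[n]$-modules (because $\partial_if^s=sf_if^{-1}f^s$ vanishes modulo $s\Lsc$). Tensoring with $\Msc$ over $\OC[n]$ gives $(\Lsc\otimes_{\OC[n]}\Msc)/s(\Lsc\otimes_{\OC[n]}\Msc)\cong\Msc[f^{-1}]$, whence a natural $\DC[n]$-linear map $\Lsc\otimes_{\OC[n]}\Msc\rightarrow\Msc[f^{-1}]$ sending $a(s)f^{-k}f^s\otimes m$ to $a(0)f^{-k}m$. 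Restricting to $\Nsc$ and dividing by $s\Nsc$ defines a candidate $\Phi:\Nsc_0\rightarrow\Msc[f^{-1}]$ with $\Phi(P(s)(f^s\otimes u))=P(0)u$; its image is the $\DC[n]$-submodule of $\Msc[f^{-1}]$ generated by $u$, which is precisely the image of the natural map $\Msc\rightarrow\Msc[f^{-1}]$.

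For part (ii), I use the $f$-saturation of $\Msc$ to ensure $\Msc\hookrightarrow\Msc[f^{-1}]$ is injective, so that the image of $\Phi$ is identified with $\Msc$ itself and $\Phi:\Nsc_0\rightarrow\Msc$ becomes a well-defined surjection with $\Phi(f^0\otimes u)=u$. The key point is well-definedness: assuming $P(s)(f^s\otimes u)\in s\Nsc$, write this as $sR(s)(f^s\otimes u)$ and apply Lemma~\ref{lemma:saturation} to $P(s)-sR(s)$ to obtain $f^{m-s}(P(s)-sR(s))f^s\cdot u=0$ in $\Msc[s]$ for some $m$; evaluating at $s=0$ gives $f^mP(0)u=0$ in $\Msc$, and $f$-saturation forces $P(0)u=0$.

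For the isomorphism on $\Omega_f$ (claim (i), and the last assertion of (ii), where $\Msc$ is automatically $f$-saturated), I will build an inverse $\Psi:\Msc|_{\Omega_f}\rightarrow\Nsc_0|_{\Omega_f}$ by $Pu\mapsto P(f^s\otimes u)+s\Nsc$. Its well-definedness hinges on an auxiliary lemma: for every $Q\in\DC[n]$, $f^s\otimes Qu\in\Nsc$ holds on $\Omega_f$, proved by induction on the order of $Q$ via the identity $f^s\otimes\partial_iv=\partial_i(f^s\otimes v)-sf_if^{-1}(f^s\otimes v)$, which is legitimate on $\Omega_f$ since $f^{-1}\in\OC[n]$ there. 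Given this, for $P\in\Isc$ the formula $P(f^s\otimes u)=f^{s-m}\otimes(f^{m-s}Pf^s)u$ from the proof of Lemma~\ref{lemma:saturation}, together with $(f^mP)u=0$, expresses $P(f^s\otimes u)$ as $s$ times an element of $\Nsc$; direct calculation then yields $\Phi\circ\Psi=\mathrm{id}$ and $\Psi\circ\Phi=\mathrm{id}$. The main obstacle is this auxiliary lemma, whose reliance on invertibility of $f$ on $\Omega_f$ is precisely why part (ii) requires $f$-saturation for the global statement.
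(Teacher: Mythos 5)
Your proposal is correct and follows essentially the same route as the paper: well-definedness of $\Phi$ comes from Lemma \ref{lemma:saturation}, expanding $f^{m-s}Q(s)f^s$ in powers of $s$ to get $f^mQ(0)u=0$ and invoking $f$-saturation, while the isomorphism on $\Omega_f$ uses invertibility of $f$ there. The only difference is cosmetic: where you build an explicit inverse $\Psi$ via an inductive lemma showing $f^s\otimes Qu\in\Nsc$ on $\Omega_f$, the paper packages the same computation as the single conjugated operator $Q(s)=f^sPf^{-s}\in\DC[n][s]$ annihilating $f^s\otimes u$ with $Q(0)=P$.
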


\begin{proof}
Since $\Msc[f^{-1}]= \Msc$ on $\Omega_f$, 
we may assume that $\Msc$ is $f$-saturated. 
In view of Lemma \ref{lemma:saturation} and the definition of 
$\Nsc_0$,  $P \in \DC[n]$ annihilates $f^{0}\otimes u$ if and only if 
there exist $Q(s) \in \DC[n][s]$ and an integer $m \geq \ord\, Q(s)$ such that
$(f^{m-s}Q(s)f^s)(1\otimes u) = 0$ in $\Msc[s]$ and  
$P = Q(0)$. 
If there exist such $Q(s)$ and $m$, set
\[
 f^{m-s}Q(s)f^s = Q_0 + Q_1s + \cdots + Q_m s^m 
\quad (Q_i \in \DC[n]).
\]
Then $Q_iu=0$ holds for any $i$.  In particular, 
$Q_0 = f^mP$ annihilates $u$. This implies $Pu=0$ since $\Msc$ is $f$-saturated. 
Hence the homomorphism $\Phi$ is well-defined. 

Now assume $f \neq 0$ and $Pu=0$. 
Then $Q(s) := f^{s}Pf^{-s}$ belongs to $\DC[n][s]$ 
and annihilates $f^s\otimes u$ by Lemma \ref{lemma:saturation}. 
Hence $P = Q(0)$ annihilates $f^0\otimes u$. 
This implies that $\Phi$ is an isomorphism on $\Omega_f$. 
\end{proof}

\begin{thm}\label{th:fsu}
If $\lambda_0$ is not a pole of $f_+^\lambda\varphi$, 
then $f_+^{\lambda_0}\varphi$ is a solution of $\Nsc_{\lambda_0}$. 
\end{thm}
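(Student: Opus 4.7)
The plan is to construct a $\DC[n]$-linear map
$\Phi : \Nsc_{\lambda_0} \to \Db$ sending the distinguished cyclic
generator $v_0$ of $\Nsc_{\lambda_0}$ --- the class of $f^s\otimes u$ ---
to the distribution $f_+^{\lambda_0}\varphi$.
Since $\Nsc = \DC[n][s](f^s\otimes u)$, the quotient
$\Nsc_{\lambda_0} = \Nsc/(s-\lambda_0)\Nsc$
is cyclic over $\DC[n]$ with generator $v_0$, so the prescription
$\Phi(Pv_0) := P f_+^{\lambda_0}\varphi$ for $P \in \DC[n]$
fully determines $\Phi$; the task therefore reduces to well-definedness.

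To verify well-definedness, I would start from an arbitrary relation
$Pv_0 = 0$ in $\Nsc_{\lambda_0}$ and rewrite it as
\[
[P - (s-\lambda_0)Q(s)](f^s\otimes u) = 0
\qquad\text{in $\Lsc\otimes_{\OC[n]}\Msc$}
\]
for some $Q(s) \in \DC[n][s]$.
Proposition \ref{prop:fundamental} applied to the operator
$P - (s-\lambda_0)Q(s) \in \DC[n][s]$ then yields
$Pf_+^\lambda\varphi - (\lambda-\lambda_0)Q(\lambda)(f_+^\lambda\varphi) = 0$
in $\Osc\Db(\C_+\times V)$ with $V := U\cap\Omega$,
and by analytic continuation this persists as an identity of
$\Db(V)$-valued meromorphic functions of $\lambda$ on $\C$.

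The decisive step is the evaluation at $\lambda = \lambda_0$.
Because $\lambda_0$ is assumed not to be a pole of $f_+^\lambda\varphi$,
both $Pf_+^\lambda\varphi$ and $Q(\lambda)(f_+^\lambda\varphi)$ are
holomorphic there; hence the explicit factor $\lambda-\lambda_0$
annihilates the second term at $\lambda = \lambda_0$, leaving
$Pf_+^{\lambda_0}\varphi = 0$ in $\Db(V)$, as required.
The $\DC[n]$-linearity of $\Phi$ is then tautological from the defining
formula, so the construction is complete and exhibits
$f_+^{\lambda_0}\varphi = \Phi(v_0)$ as a solution of $\Nsc_{\lambda_0}$.
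I expect the only real obstacle to be this evaluation step: it is the unique
point in the argument where the hypothesis on $\lambda_0$ is used, and
without it the cancellation of $\lambda - \lambda_0$ against a potentially
singular $Q(\lambda)(f_+^\lambda\varphi)$ could leave a nontrivial residue
that would spoil well-definedness.
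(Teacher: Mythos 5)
Your proposal is correct and follows essentially the same route as the paper: decompose the annihilating relation in $\Nsc_{\lambda_0}$ as an identity $P=Q(s)+(s-\lambda_0)R(s)$ with $Q(s)(f^s\otimes u)=0$ in $\Nsc$, apply Proposition \ref{prop:fundamental}, continue analytically, and evaluate at $\lambda=\lambda_0$ using the non-pole hypothesis. The only cosmetic difference is that you keep the factor $(\lambda-\lambda_0)Q(\lambda)(f_+^\lambda\varphi)$ and kill it by holomorphy at $\lambda_0$, whereas the paper isolates the identically vanishing part $Q(\lambda)(f_+^\lambda\varphi)$ first; these are the same argument.
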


\begin{proof}
Assume that $\lambda_0 \in \C$ is not a pole of $f_+^\lambda\varphi$. 
Let $P$ be a section of $\DC[n]$ which annihilates $f^{\lambda_0}\otimes u$. 
Then there exist $Q(s),R(s) \in \DC[n][s]$ such that
\[
 P = Q(s) + (s-\lambda_0)R(s), 
\quad
 Q(s)(f^s\otimes u) = 0 \mbox{ in $\Nsc$}.
\] 
Proposition \ref{prop:fundamental} implies that 
$Q(\lambda)(f_+^\lambda\varphi)$ vanishes as section of the sheaf 
$\Osc\Db$. In particular, 
$P(f_+^{\lambda_0}\varphi) = Q(\lambda_0)(f_+^{\lambda_0}\varphi) = 0$ 
holds as distribution. 
Thus the homomorphism 
\[
\DC[n](f^{\lambda_0}\otimes u) \ni P(f^{\lambda_0}\otimes u) 
\longmapsto P(f_+^{\lambda_0}\varphi) \in \Db
\]
is well-defined and $\DC[n]$-linear. 
Hence $f_+^{\lambda_0}\varphi$ is a solution of $\Nsc_{\lambda_0}$. 
\end{proof}

The following two theorems are essentially due to Kashiwara and Kawai 
\cite{KK} although they are stated with additional assumptions and stronger results. 

\begin{thm}\label{th:extension}
$\varphi$ is a soution of the holonomic $\DC[n]$-module $\Nsc_0$. 
\end{thm}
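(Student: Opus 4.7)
The plan is to reduce to Theorem \ref{th:fsu} applied both to $f$ at $\lambda_0 = 0$ and, in a symmetric way, to $-f$ at $\lambda_0 = 0$. By Proposition \ref{prop:roots} there exists $\varepsilon > 0$ such that $f_+^\lambda\varphi$ is holomorphic on $\{\myRe\lambda > -\varepsilon\}$, so $\lambda_0 = 0$ is not a pole, and Theorem \ref{th:fsu} gives that $f_+^0\varphi = \chi_{\{f > 0\}}\varphi$ is a solution of $\Nsc_0$, say via a $\DC[n]$-linear map $\Phi_+:\DC[n](f^0\otimes u) \to \Db$. Applying Proposition \ref{prop:roots} to $-f$ similarly shows that $\lambda_0 = 0$ is not a pole of $(-f)_+^\lambda\varphi = f_-^\lambda\varphi$. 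What remains is to transport this second solution to $\Nsc_0$ itself and add the two to recover $\varphi$.

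To do so I would introduce $\widetilde{\Lsc} := \OC[n][f^{-1},s](-f)^s$ and the corresponding $\DC[n][s]$-submodule $\widetilde{\Nsc} := \DC[n][s]((-f)^s\otimes u) \subset \widetilde{\Lsc}\otimes_{\OC[n]}\Msc$. The derivation $\partial_i(-f)^s = s(\partial_i f) f^{-1}(-f)^s$ has exactly the same coefficient as $\partial_i f^s = s(\partial_i f) f^{-1} f^s$, so the assignment $(-f)^s \mapsto f^s$ extends to a $\DC[n][s]$-linear isomorphism $\widetilde{\Lsc} \to \Lsc$. Tensoring over $\OC[n]$ with $\Msc$ gives a $\DC[n][s]$-isomorphism sending $(-f)^s\otimes u$ to $f^s\otimes u$; this restricts to $\widetilde{\Nsc} \cong \Nsc$ and descends modulo $s$ to $\widetilde{\Nsc}_0 \cong \Nsc_0$ with generators matched. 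Applying Theorem \ref{th:fsu} to $-f$ and transporting along this isomorphism produces a $\DC[n]$-linear map $\Phi_-:\DC[n](f^0\otimes u) \to \Db$ with $\Phi_-(f^0\otimes u) = f_-^0\varphi$.

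The map $\Phi_+ + \Phi_-$ is then $\DC[n]$-linear on $\DC[n](f^0\otimes u)$ and sends $f^0\otimes u$ to $f_+^0\varphi + f_-^0\varphi$; since $f$ is a nonzero real analytic function on a connected open set, $\{f = 0\}$ has Lebesgue measure zero, so this sum coincides with $\varphi$ as a distribution on $U$, exhibiting $\varphi$ as a solution of $\Nsc_0$. The main delicate point is verifying that the isomorphism $\widetilde{\Nsc}_0 \cong \Nsc_0$ really identifies the canonical generators — a matter of carefully tracking the tensor-product structure and the signs in the formula for $\partial_i (-f)^s$. Beyond that, the argument is a direct combination of Theorem \ref{th:fsu} and Proposition \ref{prop:roots}.
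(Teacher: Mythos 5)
Your proposal is correct and follows essentially the same route as the paper: the paper likewise notes that $\OC[n][f^{-1},s](-f)^s \cong \OC[n][f^{-1},s]f^s$ as $\DC[n][s]$-modules (so the modules $\Nsc_0$ for $f$ and for $-f$ are identified with generators matched), applies Theorem \ref{th:fsu} at $\lambda_0=0$ to both $f$ and $-f$, and concludes via $\varphi = \theta(f)\varphi + \theta(-f)\varphi$. Your version merely makes explicit two points the paper leaves implicit, namely that $0$ is not a pole (via Proposition \ref{prop:roots}) and that $\{f=0\}$ has measure zero.
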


\begin{proof}
First note that 
$\OC[n][f^{-1},s](-f)^s$ is isomorphic to $\OC[n][f^{-1},s]f^s$ 
as left $\DC[n][s]$-module 
since $\partial_i(-f)^s = sf_if^{-1}(-f)^{s}$ holds in 
$\OC[n][f^{-1},s](-f)^s$ with $f_i = \partial f/\partial x_i$. 
Assume that $P(f^0\otimes u) = 0$ holds in $\Nsc_0 = \Nsc/s\Nsc$. 
Then there exist $Q(s),R(s) \in \DC[n][s]$ such that
\[
 P = Q(s) + sR(s), 
\quad
 Q(s)(f^s\otimes u) = 0 \mbox{ in $\Nsc$}.
\] 
Let $\theta(t)$ be the Heaviside function; i.e., 
$\theta(t) = 1$ for $t > 0$ and $\theta(t) = 0$ for $t \leq 0$.  
Then we have $\theta(f) = f_+^0$ and $\theta(-f) = (-f)_+^0$. 
Theorem \ref{th:fsu} implies that $P = Q(0)$ annihilates both 
$\theta(f)\varphi$ and $\theta(-f)\varphi$, 
and hence also $\varphi = \theta(f)\varphi + \theta(-f)\varphi$. 
Thus $\varphi$ is a solution of $\Nsc_0$. 
\end{proof}

\begin{thm}
Let $\varphi_1$ and $\varphi_2$ be locally $L^p$ and $L^q$ functions 
respectively on an open set $U \subset \R^n$  
with $1 \leq p,q \leq \infty$ and $1/p + 1/q = 1$. 
Assume that $\varphi_1$ and $\varphi_2$ 
are solutions of holonomic $\Dsc_{\C^n}$-modules $\Msc_1$ and $\Msc_2$ respectively on $U$.  
Then for any point $x_0$ of $U$, 
there exists a holonomic $\Dsc_{\C^n}$-module $\Msc$ on a neighborhood of $x_0$ 
of which the product $\varphi_1\varphi_2$ is a solution. 
\end{thm}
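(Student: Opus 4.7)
The theorem is a local existence statement, so we work on a small complex neighborhood $\Omega$ of $x_0$. The plan is to reduce to Theorem \ref{th:extension} applied to $\varphi := \varphi_1\varphi_2$, which lies in $L^1_{\mathrm{loc}}(U)$ by H\"older's inequality and hence defines a distribution on $U$. It suffices to exhibit a coherent left ideal $\Isc \subset \DC[n]$ on $\Omega$, annihilating $\varphi$ on $U_g$ for a suitable real-valued real analytic $g$, such that $\DC[n]/\Isc$ is holonomic on $\Omega_g$; Theorem \ref{th:extension} then produces the desired holonomic $\Nsc_0$ on $\Omega$.

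Since $\Msc_1$ and $\Msc_2$ are holonomic, their singular supports are proper analytic subsets of $\Omega$, so we may pick a non-zero real-valued real analytic $g$ on $U \cap \Omega$, extending holomorphically to $\Omega$, whose zero locus contains both singular supports. On $\Omega_g$ each $\Msc_i$ becomes an integrable connection, and any distribution solution of an integrable connection is automatically real analytic; thus $\varphi_1$, $\varphi_2$, and hence also $\varphi_1\varphi_2$, are real analytic on $U_g$. The tensor product $\Msc_1 \otimes_{\OC[n]} \Msc_2$, equipped with the Leibniz $\DC[n]$-action, is again an integrable connection on $\Omega_g$, and the tensor of the two $\DC[n]$-linear solution maps $u_i \mapsto \varphi_i$ induces a $\DC[n]$-linear homomorphism from $\Msc_1 \otimes_{\OC[n]} \Msc_2$ to the sheaf of real analytic functions on $U_g$ sending $u_1 \otimes u_2$ to $\varphi_1\varphi_2$. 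In particular, $\varphi_1\varphi_2$ is a solution of $\Msc_1 \otimes_{\OC[n]} \Msc_2$ on $U_g$.

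To construct $\Isc$ on all of $\Omega$, pick finitely many generators $P_1, \dots, P_k$ near $x_0$ of the coherent annihilator ideal $\Ann(u_1 \otimes u_2) \subset \DC[n]|_{\Omega_g}$. Since $\DC[n]|_{\Omega_g}$ is locally obtained from $\DC[n]$ by inverting $g$, for sufficiently large $N_i$ the products $g^{N_i} P_i$ lie in $\DC[n](\Omega)$. Set $\Isc := \sum_i \DC[n] \cdot (g^{N_i} P_i)$. This is a coherent left ideal on $\Omega$ with $\Isc|_{\Omega_g} = \Ann(u_1 \otimes u_2)$, and thus $\Isc$ annihilates $\varphi_1\varphi_2$ on $U_g$. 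Moreover $(\DC[n]/\Isc)|_{\Omega_g}$ is isomorphic to the coherent $\DC[n]$-submodule $\DC[n] \cdot (u_1 \otimes u_2)$ of the holonomic module $\Msc_1 \otimes_{\OC[n]} \Msc_2$, hence is itself holonomic. Applying Theorem \ref{th:extension} with $f := g$ and this choice of $\DC[n]/\Isc$ yields a holonomic $\DC[n]$-module $\Nsc_0$ on $\Omega$ of which $\varphi_1\varphi_2$ is a solution, completing the proof.

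The main technical hurdle is the construction of $\Isc$ in the third step: one must verify that generators of the annihilator ideal on $\Omega_g$ can be lifted to coherent data on $\Omega$ by clearing $g$-denominators, without destroying the isomorphism between $(\DC[n]/\Isc)|_{\Omega_g}$ and the submodule generated by $u_1 \otimes u_2$. Everything else is a direct application of the framework set up in Section 2.
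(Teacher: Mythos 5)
Your proposal is correct and follows essentially the same route as the paper: pick a real-valued real analytic function whose zero locus contains both singular supports, use real analyticity of distribution solutions of integrable connections off that locus to realize $\varphi_1\varphi_2$ as a solution of $\Msc_1\otimes_{\OC[n]}\Msc_2$ there, and then invoke Theorem \ref{th:extension}. Your extra step of clearing $g$-denominators to produce a coherent ideal $\Isc$ on all of $\Omega$ is a reasonable (and slightly more careful) way of fitting the tensor product into the hypotheses of the framework, which the paper's own proof leaves implicit.
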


\begin{proof}
There exist analytic functions $f_1$ and $f_2$ on a neighborhood $V$ of 
$x_0$  such that 
the singular support (the projection of the characteristic variety 
minus the zero section) of $\Msc_k$ is contained in $f_k=0$ 
for $k=1,2$. Set $f(z) = f_1(z)\overline{f_1(\overline z)} 
f_2(z)\overline{f_2(\overline z)}$. Then $f(x)$ is a real-valued 
real analytic function 
and $\varphi_1$ and $\varphi_2$ are real analytic on $V_f$. 
Then it is easy to see, in the same way as in the proof of Theorem \ref{th:fsu}, 
that $\varphi_1\varphi_2$ is a solution of $\Msc_1\otimes_{\OC[n]}\Msc_2$ 
on $V_f$. To complete the proof, we have only to apply  Theorem \ref{th:extension} 
to $\Msc_1\otimes_{\OC[n]}\Msc_2$ and $f$. 
\end{proof}

\subsection{Laurent coefficients of $f_+^\lambda\varphi$}

Let $f$, $\varphi$, $\Msc$ be as in preceding subsections.

\begin{thm}
Let $p$ be a point of $U$. 
Then each coefficient of 
the Laurent expansion of $f_+^\lambda\varphi$ about an arbitrary 
$\lambda_0 \in \C$  
is a solution of a holonomic $\Dsc_{\C^n}$-module on a common neighborhood of $p$. 
\end{thm}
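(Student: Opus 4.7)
The plan is to bundle the Laurent coefficients into a single holonomic $\Dsc_{\C^n}$-module obtained by quotienting $\Nsc$ modulo a suitable power of $s-\lambda_0$, and then extract individual coefficients by $\Dsc_{\C^n}$-linear projection. First, using Proposition \ref{prop:roots} together with the discussion preceding it, I would fix a relatively compact open neighborhood $V \subset U$ of $p$ and an integer $N \geq 0$ large enough that $f_+^\lambda\varphi$, viewed as a $\Db(V)$-valued meromorphic function of $\lambda$, has a pole at $\lambda_0$ of order at most $N$. Write its Laurent expansion
\[
 f_+^\lambda\varphi = \sum_{k \geq -N} \varphi_k(\lambda-\lambda_0)^k, \qquad \varphi_k \in \Db(V).
\]

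Next, for each integer $M \geq 0$ I would consider the quotient $\Nsc^{(M)} := \Nsc/(s-\lambda_0)^{M+1}\Nsc$, regarded as a $\Dsc_{\C^n}$-module on $\Omega$. The nilpotent endomorphism $t := s - \lambda_0$ of $\Nsc^{(M)}$ yields a finite filtration whose successive quotients $t^k\Nsc^{(M)}/t^{k+1}\Nsc^{(M)}$ are each a $\Dsc_{\C^n}$-linear quotient of $\Nsc_{\lambda_0}$, which is holonomic by Kashiwara's Theorem 2.5 of \cite{K2}. Hence $\Nsc^{(M)}$ is an iterated extension of holonomic $\Dsc_{\C^n}$-modules, and therefore itself holonomic.

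Third, for each $j \geq -N$, setting $M := N + \max\{0,j\}$, I would construct a $\Dsc_{\C^n}$-linear map $\bar\Psi_j : \Nsc^{(M)} \to \Db$ on $V$ sending the class of $f^s \otimes u$ to $\varphi_j$. Start with the realization map $Q(s)(f^s \otimes u) \mapsto Q(\lambda)f_+^\lambda\varphi$ from $\Nsc$ into the space of $\Db(V)$-valued meromorphic functions of $\lambda$ near $\lambda_0$; this is well defined because any relation $Q(s)(f^s \otimes u) = 0$ in $\Nsc$ forces $Q(\lambda)(f_+^\lambda\varphi) = 0$ as a meromorphic distribution by Proposition \ref{prop:fundamental}. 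Taking the $(\lambda-\lambda_0)^j$-coefficient of this realization yields a $\Dsc_{\C^n}$-linear map $\Psi_j : \Nsc \to \Db$, since any operator in $\Dsc_{\C^n}$ acts term-by-term on the Laurent expansion in $\lambda$. To descend $\Psi_j$ to $\Nsc^{(M)}$, I would verify that the $(\lambda-\lambda_0)^{j-M-1}$-coefficient of $Q(\lambda)f_+^\lambda\varphi$ vanishes for every $Q(s) \in \Dsc_{\C^n}[s]$, which holds because $Q(\lambda)$ is polynomial in $\lambda$ and therefore $Q(\lambda)f_+^\lambda\varphi$ inherits the pole-order bound $N$, while $j - M - 1 < -N$ by the choice of $M$.

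Then $\varphi_j = \bar\Psi_j(f^s \otimes u)$ exhibits $\varphi_j$ as a solution of the cyclic $\Dsc_{\C^n}$-submodule $\Dsc_{\C^n}\cdot(f^s \otimes u) \subset \Nsc^{(M)}$, which is holonomic as a coherent submodule of the holonomic $\Nsc^{(M)}$; since $V$ depends only on $\lambda_0$ and not on $j$, all Laurent coefficients share this common neighborhood. The main technical obstacle is the descent step: one must ensure that multiplying $f_+^\lambda\varphi$ by an arbitrary polynomial $Q(\lambda)$ with $\Dsc_{\C^n}$-coefficients does not enlarge the pole order at $\lambda_0$ beyond $N$, which rests on the invariance of the pole locus under the coefficient-wise action of $\Dsc_{\C^n}$ on the Laurent series in $\lambda$.
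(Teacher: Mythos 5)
Your argument is correct, but it takes a genuinely different route from the paper's. The paper shifts by an integer $m$ with $\myRe\lambda_0+m>0$ via the $b$-function identity, writes each Laurent coefficient explicitly as $\varphi_k=\sum_j Q_{kj}\bigl(f_+^{\lambda_0+m}(\log f_+)^j\varphi\bigr)$ in terms of the honest locally integrable functions $f_+^{\lambda_0+m}(\log f_+)^j\varphi$, and realizes $\varphi_k$ as a solution of a cyclic submodule of the log-extended module $\Nsc_{\lambda_0+m}[l+k]$ built from the sections $(f^s(\log f)^j)\otimes u$ of $\tilde\Lsc\otimes\Msc$; the price is that one needs the generalized fundamental lemma (Lemma \ref{lemma:fundamental2}) to push relations involving $\log f_+$ across $\{f=0\}$. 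You avoid logarithms entirely: you quotient $\Nsc$ by $(s-\lambda_0)^{M+1}$, prove holonomicity by the $t=s-\lambda_0$ filtration whose graded pieces are quotients of Kashiwara's holonomic $\Nsc_{\lambda_0}$ (the same device the paper applies to $\Nsc[k]/\Nsc[k-1]$), and extract $\varphi_j$ as a Laurent coefficient of the meromorphic realization $Q(s)(f^s\otimes u)\mapsto Q(\lambda)(f_+^\lambda\varphi)$, which requires only Proposition \ref{prop:fundamental} plus uniqueness of analytic continuation. Your descent step holds up: $\Dsc_{\C^n}$ acts coefficientwise on the Laurent series and multiplication by a polynomial in $\lambda$ cannot increase the pole order, so with $M=N+\max\{0,j\}$ the image of $(s-\lambda_0)^{M+1}\Nsc$ has vanishing $j$-th coefficient; the uniform pole-order bound $N$ on a relatively compact neighborhood $V$ of $p$ is supplied by the $b$-function functional equation, exactly as in the discussion preceding Proposition \ref{prop:roots}. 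What the paper's route buys is algorithmic content: the explicit formula (\ref{eq:phik}) and the modules $N[k]$ are precisely what Section 3.3 computes. What your route buys is economy — no $\tilde\Lsc$, no Lemma \ref{lemma:fundamental2}, and a single uniformly defined homomorphism $\bar\Psi_j$ — and since $\Nsc/(s-\lambda_0)^{M+1}\Nsc$ is just as computable as $N[k]$, your version could equally well underlie the algorithm.
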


\begin{proof}
Fix $m \in \N$ such that $\myRe \lambda_0 + m > 0$. 
By using the functional equation involving the generalized $b$-function, 
we can find a nonzero $b(s) \in \C[s]$ and 
a germ $P(s)$ of $\DC[n][s]$ at $p$ such that
\[
 b(\lambda)f_+^\lambda\varphi = P(\lambda)(f_+^{\lambda+m}\varphi). 
\]
Factor $b(s)$ as $b(s) = (s-\lambda_0)^l c(s)$ with $c(s) \in \C[s]$ 
such that $c(\lambda_0) \neq 0$ and an integer $l \geq 0$.  
Then we have
\[
(\lambda - \lambda_0)^l f_+^\lambda\varphi
= \frac{1}{c(\lambda)}P(\lambda)(f_+^{\lambda+m}\varphi). 
\]
The right-hand side is holomorphic in $\lambda$ on an neighborhood of 
$\lambda = \lambda_0$. 
Let 
\[
f_+^\lambda\varphi 
= \sum_{k=-l}^\infty (\lambda - \lambda_0)^k \varphi_{k} 
\]
be the Laurent expansion with $\varphi_k \in \Db(U)$, which is given by
\[
\varphi_k = \frac{1}{(l+k)!} \lim_{\lambda \rightarrow \lambda_0}
\frac{\partial^{l+k}}{\partial \lambda^{l+k}}
\left((\lambda - \lambda_0)^lf_+^\lambda\varphi\right)
= 
\frac{1}{(l+k)!} \lim_{\lambda \rightarrow \lambda_0}
\frac{\partial^{l+k}}{\partial \lambda^{l+k}}
\left(\frac{1}{c(\lambda)}P(\lambda)(f_+^{\lambda+m}\varphi)\right)
.
\]
Hence there exist $Q_{kj} \in \DC[n]$ such that
\begin{equation}\label{eq:phik}
\varphi_k = \sum_{j=0}^{l+k} Q_{kj}(f_+^{\lambda_0+ m}(\log f_+)^j\varphi). 
\end{equation}

First let us show that $f_+^{\lambda_0+ m}(\log f_+)^j\varphi$ 
with $0 \leq j \leq k$ satisfy a holonomic system.
Consider the free $\OC[n][s,f^{-1}]$-module 
\[
 \tilde\Lsc := \OC[n][s,f^{-1}]f^s \oplus \OC[n][s,f^{-1}]f^s \log f 
\oplus \OC[n][s,f^{-1}] f^s (\log f)^2 \oplus \cdots,  
\]
which has a natural structure of left $\DC[n][s]$-module. 
Let
\[
 \Nsc[k] := \DC[n][s](f^s\otimes u) + \DC[n][s]((f^s\log f)\otimes u) + \cdots 
+ \DC[n][s]((f^s(\log f)^k)\otimes u)
\]
be the left $\DC[n][s]$-submodule of $\tilde\Lsc \otimes_{\OC[n]}\Msc$
generated by $(f^s(\log f)^j)\otimes u$ with $j=0,1,\dots,k$. 
It is easy to see that $\Nsc[k]/\Nsc[k-1]$ is isomorphic to 
$\Nsc = \Nsc[0]$ as left $\DC[n][s]$-module since 
\[
 P(s)((f^s(\log f)^k)\otimes u) \equiv 
 (f^{s-m}(\log f)^k)\otimes (f^{m-s}P(s)f^s) u \quad \mod \Nsc[k-1]
\] 
holds for any $P(s) \in \DC[n][s]$ with $m = \ord\, P(s)$. 
Moreover, $\Nsc_{\lambda_0}[k] := \Nsc[k]/(s-\lambda_0)\Nsc[k]$ 
is a holonomic $\DC[n]$-module since 
$\Nsc_{\lambda_0}[k]/\Nsc_{\lambda_0}[k-1]$ is isomorphic to 
$\Nsc_{\lambda_0} = \Nsc_{\lambda_0}[0]$, and hence is holonomic 
as left $\DC[n]$-module. 

Let $(f^{\lambda_0+m}(\log f)^j)\otimes u \in \Nsc_{\lambda_0+m}[k]$ 
be the residue class of $(f^s(\log f)^j)\otimes u$ modulo 
$(s-\lambda_0-m)\Nsc[k]$. 
Suppose 
$\sum_{j=0}^k P_j((f^{\lambda_0+m}(\log f)^j)\otimes u)$ vanishes 
in $\Nsc_{\lambda_0+m}[k]$ with $P_j$ being a section of $\DC[n]$ 
on an open neighborhood of a point $p$ of $U$. 
Then there exist $Q_j(s) \in \DC[n][s]$ such that
\[
\sum_{j=0}^k P_j((f^s(\log f)^j) \otimes u) 
= (s-\lambda_0-m)\sum_{j=0}^k Q_j(s)((f^s(\log f)^j) \otimes u)
\]
holds in $\Nsc[k]$. 
Then it is easy to see that 
\begin{equation}\label{eq:log}
\sum_{j=0}^{k} P_j(\lambda)(f_+^\lambda(\log f_+ )^j\varphi) 
= (\lambda-\lambda_0-m)\sum_{j=0}^k Q_j(\lambda)(f_+^\lambda(\log f_+)^j\varphi)
\end{equation}
holds in $\Osc\Db(\C_+ \times W_f)$ with an open neighborhood $W$ of $p$.  
Lemma \ref{lemma:fundamental2} and analytic continuation imply that 
(\ref{eq:log}) holds in $\Osc\Db(\C_+ \times W)$. 
Hence we have in $\Db(W)$
\[
\sum_{j=0}^{k} P_j((f_+^{\lambda_0+m}(\log f_+)^j) \varphi) = 0 .
\]

In conclusion, with $k$ replaced by $l+k$, there exists a $\DC[n]$-homomorphism 
$\Phi : \Nsc_{\lambda_0+m}[l+k] \rightarrow \Db$ such that
\[
\Phi((f^{\lambda_0+m}(\log f)^j)\otimes u) = 
 f_+^{\lambda_0+m}(\log f_+)^j 
\qquad (0 \leq j \leq l+k).
\]
Set 
\[
w := \sum_{j=0}^{l+k} Q_{kj}((f^{\lambda_0+m}(\log f)^j)\otimes u), 
\qquad
\Msc_{k} := \DC[n]w.
\]
Then $\Msc_{k}$ is a $\DC[n]$-submodule of $\Nsc_{\lambda_0+m}[l+k]$ and hence holonomic. 
Since $\Phi(w) = \varphi_k$ in view of (\ref{eq:phik}), 
$\varphi_k$ is a solution of $\Msc_k$. 
This completes the proof. 
\end{proof}

\section{Algorithms}

We give algorithms for computing holonomic systems introduced in 
the previous section assuming that $f$ is a real polynomial and 
that $\Msc$ is algebraic, i.e., defined by differential 
operators with polynomial coefficients. 
Let $D_n := \C\langle x,\partial\rangle 
= \C\langle x_1,\dots,x_n,\partial_1,\dots,\partial_n\rangle$ 
be the ring of differential operators with polynomial coefficients 
with $\partial_j = \partial/\partial x_j$. 
The ring $D_n$ is also called the $n$-th Weyl algebra over $\C$. 

In the sequel, let $f$ be a non-constant real polynomial of $x = (x_1,\dots,x_n)$ 
and $\varphi$ be a locally integrable function on an open connected set $U$ 
of $\R^n$. 
We assume that there exists a left ideal $I$ of $D_n$ which annihilates 
$\varphi$ on $U_f$, i.e., $P\varphi = 0$ holds on $U_f$ for any $P \in I$, 
such that $M := D_n/I$ is a holonomic $D_n$-module. 
We denote by $u$ the residue class of $1 \in D_n$ modulo $I$. 
Let $L = \C[x,f^{-1},s]f^s$ be the free $\C[x,f^{-1},s]$-module generated 
by $f^s$, which has a natural structure of left $D_n[s]$-module. 
Let $N := D_n[s](f^s\otimes u)$ be the left $D_n$-submodule of 
$L\otimes_{\C[x]}M$ generated by $f^s\otimes u$. 

As was established in the previous section, 
$f_+^\lambda\varphi$ is a $\Db(U)$-valued meromorphic function 
on $\C$ and is a solution of $N$. 

\subsection{Mellin transform}

Let us assume that $\varphi$ is real analytic on $U_f$ and 
set
\[
\tilde\varphi(x,\lambda) := 
\int_{-\infty}^\infty t_+^\lambda\delta(t - f(x))\varphi(x)\,dt.
\]
This is well-defined and coincides with $f_+^\lambda\varphi$ as a 
distribution on $U_f \times \C_+$.  Then we have
\begin{align*}
&
 \int_{-\infty}^\infty t_+^\lambda t\delta(t - f(x))\varphi(x)\,dt  
= \tilde\varphi(x,\lambda+1), 
\\&
\int_{-\infty}^\infty t_+^\lambda\partial_t (\delta(t - f(x))\varphi(x))\,dt
= -\int_{-\infty}^\infty \partial_t(t_+^\lambda)\delta(t - f(x))\varphi(x)\,dt
= -\lambda\tilde\varphi(x,\lambda-1).
\end{align*}
Let $D_{n+1} = D_n\langle t,\partial_t\rangle$ be the $(n+1)$-th Weyl algebra 
with $\partial_t = \partial/\partial t$. 
Let us consider the ring $D_n\langle s,E_s,E_s^{-1}\rangle$ 
of difference-differential operators with the shift operator 
$E_s : s \mapsto s+1$, where $s$ is an indeterminate corresponding to $\lambda$. 
In view of the identities above, let us
define the ring homomorphism (Mellin transform of operators)
\[
\mu : D_{n+1} \longrightarrow D_n\langle s, E_s, E_s^{-1} \rangle
\]
by 
\[
\mu(t) = E_s, \quad \mu(\partial_t) = -sE_s^{-1}, 
\quad \mu(x_j) = x_j, \quad \mu(\partial_{x_j}) = \partial_{x_j}. 
\]
It is easy to see that $\mu$ is well-defined and injective 
since $[\partial_t,t] = [\mu(\partial_t),\mu(t)] = 1$. 
Hence we may regard 
$D_{n+1}$ as a subring of $D_n\langle s, E_s, E_s^{-1} \rangle$. 
Since $\mu(\partial_tt) = -s$, we can also regard $D_n[s]$ as a
subring of $D_{n+1}$. Thus we have inclusions
\[
 D_n[s] \,\,\subset\,\, D_{n+1} \,\,\subset\,\, 
D_n\langle s, E_s, E_s^{-1} \rangle
\]
of rings and $L\otimes_{\C[x]}M$ has a structure of 
left $D_n\langle s,E_s,E_s^{-1}\rangle$-module compatible with that of left $D_n[s]$-module. 
Let $\Fsc(U)$ be the $\C$-vector space of the 
$\Db(U)$-valued meromorphic functions on $\C$.
Then $\Fsc(U)$ has a natural structure of left $D_n\langle s, E_s, E_s^{-1} \rangle$-module, which is compatible with that of $D_n[s]$-module. 
In particular, we can regard $\Fsc(U)$ as a left $D_{n+1}$-module.

\subsection{Computation of $N = D_n[s](f^s\otimes u)$} 

The inclusion $D_{n+1}f^s \subset L = \C[x,f^{-1},s]f^s$ induces a natural 
$D_{n+1}$-homomorphism 
\[
\begin{CD}
D_{n+1}f^s \otimes_{\C[x]}M @>{\iota}>> L \otimes_{\C[x]} M \\
\cup && \cup \\
N'                          @>{\iota'}>> N 
\end{CD}
\]
where $N'$ is the left $D_n[s]$-submodule 
of $D_{n+1}f^s\otimes_{\C[x]}M$ generated by $f^s\otimes u$ and 
$N$ is the left $D_n[s]$-submodule of $L\otimes_{\C[x]}M$ generated by $f^s\otimes u$. 
The homomorphism $\iota$ induces a surjective $D_n[s]$-homomorphism
$\iota' : N' \rightarrow N$. 

\begin{prp}\label{prop:iota}
The homomorphism $\iota$ is injective if and only if 
$M$ is $f$-saturated; i.e., the homomorphism $f : M \rightarrow M$ 
is injective.
\end{prp}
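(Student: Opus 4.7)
The starting observation is that $L$ is flat over $\C[x]$: as a $\C[x]$-module, $L \cong \C[x,s][f^{-1}]$ (via $gf^s \leftrightarrow g$), the localization of the polynomial ring $\C[x,s]$ at $f$, hence flat over $\C[x]$. Moreover, since $L_0 := \C[x,s]f^s$ is contained in $D_{n+1}f^s$ and $L$ is the $f$-localization of $L_0$, one has $L = D_{n+1}f^s \otimes_{\C[x]} \C[x,f^{-1}]$ and $L\otimes_{\C[x]} M \cong (D_{n+1}f^s \otimes_{\C[x]} M)[f^{-1}]$, so $\iota$ is the canonical localization map. Equivalently, tensoring the short exact sequence $0 \to D_{n+1}f^s \to L \to L/D_{n+1}f^s \to 0$ with $M$ over $\C[x]$ gives $\ker\iota \cong \mathrm{Tor}_1^{\C[x]}(L/D_{n+1}f^s, M)$, and the problem reduces to determining when this Tor vanishes.

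The plan is to analyze $L/D_{n+1}f^s$ via the filtration $L_N := f^{-N}\C[x,s]f^s$ (with $L_N/L_{N-1} \cong \C[x,s]/f\C[x,s]$) together with the induced filtration $F^N := D_{n+1}f^s \cap L_N$. Using the identity $\partial_t^N f^s = (-1)^N s(s-1)\cdots(s-N+1)\, f^{s-N}$ and the annihilator relations $(t-f)f^s = 0$, $(\partial_{x_i}+f_i\partial_t)f^s = 0$, any element of $D_{n+1}f^s$ reduces to the normal form $\sum_k c_k(x,s)\partial_t^k f^s$ with $c_k \in \C[x,s]$. Reading off the top-order term shows the image of $F^N/F^{N-1}$ in $L_N/L_{N-1}$ is the principal ideal generated by $s(s-1)\cdots(s-N+1)$, and consequently there is a short exact sequence
\[
0 \to L_{N-1}/F^{N-1} \to L_N/F^N \to C_N \to 0, \qquad C_N := \C[x,s]/(f,\ s(s-1)\cdots(s-N+1)),
\]
with $L/D_{n+1}f^s = \varinjlim_N L_N/F^N$. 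Because $C_N$ is a free $\C[x]/f$-module of rank $N$, the length-one resolution $0 \to \C[x]\xrightarrow{f}\C[x]\to\C[x]/f\to 0$ gives $\mathrm{Tor}_1^{\C[x]}(C_N, M) = M[f]^N$ and $\mathrm{Tor}_i^{\C[x]}(C_N, M) = 0$ for $i\ge 2$, where $M[f] := \{m\in M : fm = 0\}$.

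Both implications now follow. For $(\Leftarrow)$, if $M$ is $f$-saturated then $M[f] = 0$ and $\mathrm{Tor}_1(C_N, M) = 0$; an induction on $N$ using the long exact Tor sequence (base case $L_0/F^0 = 0$) gives $\mathrm{Tor}_1(L_N/F^N, M) = 0$ for all $N$, and passing to the direct limit, $\mathrm{Tor}_1^{\C[x]}(L/D_{n+1}f^s, M) = 0$, so $\iota$ is injective. For $(\Rightarrow)$, if $M[f]\neq 0$ the $N=1$ term of the long exact sequence yields $\mathrm{Tor}_1(L_1/F^1, M) \cong M[f] \neq 0$, and because the vanishing of $\mathrm{Tor}_2(C_N, M)$ makes each transition map $\mathrm{Tor}_1(L_{N-1}/F^{N-1}, M) \hookrightarrow \mathrm{Tor}_1(L_N/F^N, M)$ injective, this non-zero class survives into the direct limit, forcing $\iota$ to have non-trivial kernel. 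The hard part will be the normal-form reduction identifying the image of $F^N/F^{N-1}$ in $L_N/L_{N-1}$ as the principal ideal $(s(s-1)\cdots(s-N+1))$, which is a careful commutator manipulation in the Weyl algebra; once that structural description is in hand, the Tor computation is standard.
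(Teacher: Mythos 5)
Your argument is correct in outline, but it takes a far longer homological route than the paper, and in fact your own opening paragraph already contains the short proof. The paper argues directly: $D_{n+1}f^s\cong\C[x,t,(t-f)^{-1}]/\C[x,t]$ is a \emph{free} $\C[x]$-module with basis $\{\partial_t^jf^s\}_{j\ge 0}$, so every $w\in D_{n+1}f^s\otimes_{\C[x]}M$ is uniquely $w=\sum_j(\partial_t^jf^s)\otimes u_j$; then $\iota(w)=\sum_j(-1)^js(s-1)\cdots(s-j+1)f^{s-j}\otimes u_j$, and since the falling factorials form a $\C$-basis of $\C[s]$ this vanishes iff each $f^{s-j}\otimes u_j=0$, which by Lemma \ref{lemma:saturation} means each $u_j$ is killed by a power of $f$. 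Thus $\ker\iota$ is exactly the $f$-power-torsion of $\bigoplus_j M$, which is zero iff $f:M\to M$ is injective. Your observation that $L=(D_{n+1}f^s)[f^{-1}]$, so that $\iota$ is the localization map, gives the same conclusion immediately (the kernel of localization at $f$ is the $f$-power torsion, and freeness identifies $D_{n+1}f^s\otimes_{\C[x]}M$ with a direct sum of copies of $M$); no filtration of $L/D_{n+1}f^s$ and no $\mathrm{Tor}$ computation is needed. Your longer route through $\ker\iota\cong\mathrm{Tor}_1^{\C[x]}(L/D_{n+1}f^s,M)$ and the graded pieces $C_N$ does work, and the $\mathrm{Tor}$ bookkeeping (freeness of $C_N$ over $\C[x]/f$, vanishing of $\mathrm{Tor}_2$, injectivity of the transition maps, passage to the colimit) is sound. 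But the step you flag as ``the hard part'' is genuinely the crux and is more than ``reading off the top-order term'': if $w=\sum_{j\le k}c_j(x)\partial_t^jf^s$ lies in $L_N$ with $k>N$, you must first deduce $f\mid c_k$ (because $c_k\,s(s-1)\cdots(s-k+1)\equiv 0$ mod $f$ and $s(s-1)\cdots(s-k+1)$ is a non-zero-divisor in $(\C[x]/f)[s]$), then use $fc_k'\partial_t^kf^s=-c_k'(s-k+1)\partial_t^{k-1}f^s$ to lower $k$, and iterate down to $k=N$ before the top coefficient lands in the ideal $(s(s-1)\cdots(s-N+1))$. Once that reduction is written out your proof is complete; what the extra machinery buys is a finer description of $L/D_{n+1}f^s$ as an iterated extension of the modules $C_N$, which is more than this proposition requires.
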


\begin{proof}
First note that $D_{n+1}f^s$ is isomorphic to the first local cohomology 
group 
$\C[x,t,(t-f)^{-1}]/\C[x,t]$ 
of $\C[x,t]$ supported in the non-singular hypersurface 
$t-f(x) = 0$ since
\[
(t-f)f^s = 0,\quad (\partial_{x_i} + f_i\partial_t)f^s = 0 
\quad (i=1,\dots,n). 
\]
In particular, $D_{n+1}f^s$ is a free $\C[x]$-module generated by 
$\partial_t^jf^s$ with $j \geq 0$. 
Hence an arbitrary element $w$ of $D_{n+1}f^s\otimes_{\C[x]}M$ is uniquely written in the form
\[
 w = \sum_{j=0}^k (\partial_t^jf^s) \otimes u_j 
\]
with $u_j \in M$ and $k \in \N$. Then 
\begin{align*}
\iota(w) &= \sum_{j=0}^k (-1)^js(s-1)\cdots (s-j+1)f^{s-j}\otimes u_j
\end{align*}
vanishes if and only if $f^{s-j}\otimes u_j = 0$,
which is equivalent to $f^{m_j}u_j = 0$ with some $m_j \in \N$ 
by Lemma \ref{lemma:saturation}, 
for all $j=0,1,\dots,k$.
This completes the proof.
\end{proof}

Let $\tilde M$ be the left $D_n$-submodule of the 
localization $M[f^{-1}] := \C[x,f^{-1}]\otimes_{\C[x]}M$ 
which is generated by $1\otimes u$. 
Then $\tilde M$ is $f$-saturated and the natural homomorphism
\[
L \otimes_{\C[x]} M  \longrightarrow L\otimes_{\C[x]}\tilde M
\] 
is an isomorphism by Lemma \ref{lemma:saturation}.  

An algorithm to compute $M[f^{-1}]$ was presented in \cite{OTW} under the assumption that $M$ is holonomic on $\C^n \setminus \{f=0\}$. 
It provides us with an algorithm to compute $\tilde M$, i.e., 
the annihilator of $1\otimes u \in M[f^{-1}]$. 
Hence we may assume, from the beginning, 
that $M$ is holonomic and $f$-saturated. 
Then $\iota':N'\rightarrow N$ is an isomorphism by Proposition \ref{prop:iota}.
The $f$-saturatedness is equivalent to the vanishing of the zeroth local 
cohomology group of $M$ with support in $f=0$, which can be computed by  
algorithms presented in \cite{O1},\cite{W},\cite{OT2}.

Thus we have only to give
an algorithm to compute the structure of $N'$ assuming $M$ to be 
$f$-saturated. We  follow an argument introduced by Walther \cite{W}.
Note that we gave in \cite{O1} an algorithm based on tensor product computation 
which is less efficient. 

\begin{dfn}
For a differential operator $P=P(x,\partial) \in D_n$, set
\[
\tau(P) := P\left(x, 
\partial_{x_1} + f_1\partial_{t},\dots,\partial_{x_n} + f_n\partial_{t}
\right) \in D_{n+1}
\]
with $f_j = \partial f/\partial x_j$. 
This substitution is well-defined 
since the operators $\partial_{x_j} + f_j\partial_t$ commute with one another 
and $[\partial_{x_j} + f_j\partial_t,x_i] = \delta_{ij}$ holds. 

Moreover, for a left ideal $I$ of $D_{n+1}$, let $\tau(I)$ be the 
left ideal of $D_{n+1}$ which is generated by the set 
$\{\tau(P) \mid P \in I\}$. 
\end{dfn}

\begin{lmm}\label{lemma:tau}
$\tau(P)(f^s\otimes v) = f^s\otimes (Pv)$ holds in $L\otimes_{\C[x]}M$ 
for any $P \in D_n$ and $v\in M$.
\end{lmm}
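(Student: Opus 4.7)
The plan is to exploit the fact that $\tau:D_n\to D_{n+1}$ is a $\C$-algebra homomorphism (as guaranteed by the Weyl-algebra relations already verified in the definition: the operators $\partial_{x_j}+f_j\partial_t$ pairwise commute and satisfy $[\partial_{x_j}+f_j\partial_t,x_i]=\delta_{ij}$) and then reduce the identity to a check on the generators $x_i,\partial_{x_i}$ of $D_n$. Indeed, the set $S:=\{P\in D_n : \tau(P)(f^s\otimes v)=f^s\otimes Pv \text{ for all }v\in M\}$ is closed under $\C$-linear combinations, and also under composition, since for $P,Q\in S$,
\[
\tau(PQ)(f^s\otimes v) = \tau(P)\tau(Q)(f^s\otimes v) = \tau(P)(f^s\otimes Qv) = f^s\otimes PQv.
\]
Hence it suffices to check that $x_i$ and $\partial_{x_i}$ belong to $S$ for each $i=1,\dots,n$.

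The case $P=x_i$ is immediate, since $\tau(x_i)=x_i$ and the tensor product is balanced over $\C[x]$. For $P=\partial_{x_i}$, the Leibniz rule for the left $D_n$-action on $L\otimes_{\C[x]}M$ gives
\[
\partial_{x_i}(f^s\otimes v) = (\partial_{x_i}f^s)\otimes v + f^s\otimes\partial_{x_i}v = sf_if^{s-1}\otimes v + f^s\otimes\partial_{x_i}v.
\]
The remaining ingredient is the $\partial_t$-action on $L\otimes_{\C[x]}M$, which is inherited from the Mellin-transform embedding $\mu:D_{n+1}\hookrightarrow D_n\langle s,E_s,E_s^{-1}\rangle$ with $\mu(\partial_t)=-sE_s^{-1}$; this yields $\partial_t(f^s\otimes v)=-sf^{s-1}\otimes v$. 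Adding $f_i\partial_t(f^s\otimes v)=-sf_if^{s-1}\otimes v$ to the previous display produces the cancellation
\[
\tau(\partial_{x_i})(f^s\otimes v) = (\partial_{x_i}+f_i\partial_t)(f^s\otimes v) = f^s\otimes\partial_{x_i}v,
\]
as required.

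The only subtlety worth flagging is the consistency of the $\partial_t$-action with the $\C[x]$-balancing of the tensor product: since $\partial_t$ commutes with every element of $\C[x]$ inside $D_n\langle s,E_s,E_s^{-1}\rangle$, its action descends unambiguously to $L\otimes_{\C[x]}M$ through the first factor alone. Once this is granted, the proof is essentially formal, and the identity is engineered precisely so that the two $sf_if^{s-1}\otimes v$ terms cancel.
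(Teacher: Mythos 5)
Your proof is correct and follows essentially the same route as the paper: the paper likewise verifies the identity on the generators by computing $(\partial_{x_j}+f_j\partial_t)(f^s\otimes v)=sf_jf^{s-1}\otimes v+f^s\otimes\partial_{x_j}v-sf_jf^{s-1}\otimes v=f^s\otimes\partial_{x_j}v$ and then invokes the homomorphism property of $\tau$. Your explicit reduction to generators and the remark on the well-definedness of the $\partial_t$-action merely spell out steps the paper leaves implicit.
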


\begin{proof}
By the definition of the action of $D_{n+1}$ on $L \otimes_{\C[x]}M$ via 
the Mellin transform, we have
\[
(\partial_{x_j} + f_j \partial_t)(f^s\otimes v) 
= sf^{-1}f_jf^s \otimes v + f^s \otimes(\partial_{x_j}v)
- sf_j f^{-1}f^s \otimes v = f^s \otimes(\partial_{x_j}v). 
\]
This implies the conclusion of the lemma. 
\end{proof}

\begin{prp}
Let $I$ be a left ideal of $D_n$ and set $M = D_n/I$ 
with $u \in M$ being the residue class of $1$ modulo $I$.  
Let $J$ be the left ideal of $D_{n+1}$ which is generated by
$\tau(I) \cup \{ t - f(x)\}$.
Then $J$ coincides with the annihilator
$\Ann_{D_{n+1}}(f^s\otimes u)$ of $f^s\otimes u\in D_{n+1}f^s\otimes_{\C[x]}M$. 
\end{prp}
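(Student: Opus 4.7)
My plan is to prove both inclusions; the nontrivial direction relies on a direct-sum decomposition of $D_{n+1}$. The containment $J\subseteq\Ann_{D_{n+1}}(f^s\otimes u)$ is immediate: the relation $(t-f)f^s=0$ noted in the proof of Proposition \ref{prop:iota} gives $(t-f)(f^s\otimes u)=0$, and Lemma \ref{lemma:tau} (whose calculation transfers verbatim to $D_{n+1}f^s\otimes_{\C[x]}M$) gives $\tau(P)(f^s\otimes u)=f^s\otimes Pu=0$ for every $P\in I$.

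For the reverse inclusion, the central step I would establish is the decomposition
\[
D_{n+1}=\tau(D_n)[\partial_t]\,\oplus\,D_{n+1}(t-f)
\]
as left $\tau(D_n)[\partial_t]$-modules. This comes from recognizing $\{x_j,\tau(\partial_{x_j}),t-f,\partial_t\}$ as an alternative Weyl-algebra generating set of $D_{n+1}$: the commutators $[\tau(\partial_{x_i}),\tau(\partial_{x_j})]=0$ and $[\tau(\partial_{x_i}),x_j]=\delta_{ij}$ are already recorded in the definition of $\tau$; the identity $[\partial_t,t-f]=1$ is obvious; $[\tau(\partial_{x_j}),t-f]=0$ is a direct check; and the remaining commutators vanish. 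Since $\partial_{x_j}=\tau(\partial_{x_j})-f_j\partial_t$ and $t=(t-f)+f$, these new generators still generate $D_{n+1}$ as a $\C$-algebra, so PBW applied to this alternative presentation exhibits $D_{n+1}$ as a free left $\tau(D_n)[\partial_t]$-module on $\{(t-f)^i\}_{i\ge 0}$. The summand over $i\ge 1$ is precisely $D_{n+1}(t-f)$, giving the claimed decomposition.

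Granted this, for $Q\in\Ann_{D_{n+1}}(f^s\otimes u)$ I would write $Q=Q'+R(t-f)$ with $Q'=\sum_{j=0}^k\tau(R_j)\partial_t^j$ and $R_j\in D_n$; the second summand lies in $J$ and annihilates $f^s\otimes u$, so it suffices to handle $Q'$. Since $\partial_t$ commutes with $\tau(D_n)$, Lemma \ref{lemma:tau} yields
\[
Q'(f^s\otimes u)=\sum_{j=0}^k(\partial_t^j f^s)\otimes R_j u.
\]
By (the proof of) Proposition \ref{prop:iota}, $\{\partial_t^j f^s\}_{j\ge 0}$ is a $\C[x]$-basis of $D_{n+1}f^s$, hence $D_{n+1}f^s\otimes_{\C[x]}M=\bigoplus_j(\partial_t^jf^s)\otimes M$ and the vanishing of the sum forces $R_j u=0$, i.e., $R_j\in I$, for each $j$. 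Therefore $Q'=\sum_j\partial_t^j\tau(R_j)\in D_{n+1}\tau(I)\subseteq J$, closing the argument. The one place requiring real care is the Weyl-algebra/PBW step underlying the direct-sum decomposition; once that is in hand the remainder is essentially bookkeeping in the free basis $\{\partial_t^j f^s\}$.
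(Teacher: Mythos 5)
Your proof is correct and follows essentially the same route as the paper's: both reduce an annihilating operator modulo $D_{n+1}(t-f)$ to the form $\sum_j \tau(R_j)\partial_t^j$ and then invoke the freeness of $D_{n+1}f^s$ over $\C[x]$ on the basis $\{\partial_t^j f^s\}$ to conclude $R_j\in I$. The only difference is that you explicitly justify the decomposition $D_{n+1}=\tau(D_n)[\partial_t]\oplus D_{n+1}(t-f)$ by a PBW argument for the alternative generators, a step the paper simply asserts when it rewrites $P$.
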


\begin{proof}
We have only to show that for $P \in D_{n+1}$ the equivalence
\[
 P \in J \quad\Leftrightarrow\quad P(f^s\otimes u) = 0 \quad\mbox{in}\quad 
D_{n+1}f^s \otimes_{\C[x]} M. 
\]
Suppose $Q$ belongs to $J$. Then $P$ annihilates $f^s\otimes u$ by Lemma 
\ref{lemma:tau}.

Conversely, suppose $P(f^s\otimes u) = 0$ in $D_{n+1}f^s \otimes_{\C[x]}M$. 
We can rewrite $P$ in the form 
\[
P = \sum_{\alpha\in\N^n,\nu\in\N}
p_{\alpha,\nu}(x)
\Bigl(\partial_{x_1} + \frac{\partial f}{\partial{x_1}}
\partial_{t} \Bigr)^{\alpha_1}
\cdots
\Bigl(\partial_{x_n} + \frac{\partial f}{\partial{x_n}}
\partial_{t} \Bigr)^{\alpha_n}\partial_t^\nu
+ Q\cdot(t - f(x))
\nonumber
\]
with $p_{\alpha,\nu}(x) \in \C[x]$ and $Q \in D_{n+1}$. 
Setting $P_\nu := \sum_{\alpha \in \N^n}p_{\alpha,\nu}(x)\partial_x^\alpha$,
we get
\[
0 = P(f^s\otimes u ) = \sum_{\nu=0}^\infty (\partial_t^\nu f^s)\otimes P_\nu u
\,\in\, D_{n+1}f^s \otimes_{\C[x]} M. 
\]
It follows that each $P_\nu$ belongs to $I$ 
since $\{\partial_t^\nu f^s\}$ constitutes a free basis of 
$D_{n+1}f^s$ over $\C[x]$. 
Hence we have
\[
 P = \sum_{\nu = 1}^\infty \partial_t^\nu \tau(P_\nu) 
+ Q\cdot(t-f(x))\,\, \in J. 
\]
This completes the proof.
\end{proof}

In order to compute the structure of the $D_n[s]$-submodule $N' = D_n[s](f^s\otimes u)$ 
of $D_{n+1}f^s \otimes_{\C[x]}M$, we have only to compute the annihilator
\[
 \Ann_{D_n[s]}(f^s \otimes u) = D_n[s] \cap J,
\]
where we regard $D_n[s]$ as a subring of $D_{n+1}$.
This can be done as follows: 

Introducing new variables $\sigma$ and $\tau$, 
for $P \in D_{n+1}$, let $h(P) \in D_{n+1}[\tau]$ be the homogenization 
of $P$ with respect to the weights 
\begin{center}
\begin{tabular}{lllll}
\hline
$x_j$ & $\partial_{x_j}$ & $t$ & $\partial_t$ & $\tau$ \\ \hline
$0$     &  $0$               & $-1$ & $1$  & $-1$ \\ \hline
\end{tabular}
\end{center}
Let $J'$ be the left ideal of $D_{n+1}[\sigma,\tau]$ generated by
\[
\{ h(P) \mid P \in \tilde G \} \cup \{ 1 - \sigma\tau \},
\]
where $\tilde G$ is a set of generators of $J$. 

Set $J'' = J' \cap D_{n+1}$. 
Since each element $P$ of $J''$ is homogeneous with respect to the above weights, 
there exists $P'(s) \in D_n[s]$ such that $P = S P'(-\partial_t t)$ 
with $S = t^\nu$ or $S = \partial_t^\nu$ with some integer $\nu \geq 0$.   
We set $P'(s) = \psi(P)(s)$. 
Then $\{ \psi(P) \mid P \in J''\}$ generates the left ideal 
$J \cap D_n[s]$ of $D_n[s]$. This procedure can be done 
by using a Gr\"obner basis in $D_{n+1}[\sigma,\tau]$. 
In conclusion, we have a set of generators of  
$J \cap D_n[s]$.  Then $N'$, and hence $N$ also if $M$ is $f$-saturated, 
 is isomorphic to $D_n[s]/(J \cap D_n[s])$ as 
left $D_n[s]$-module. 

The generalized $b$-function for $f$ and $u$ can be computed as the generator of
the ideal 
\[
\C[s] \cap (\Ann_{D_n[s]} f^s\otimes u  + D_n[s]f)
\]
of $\C[s]$ by elimination via Gr\"obner basis computation in $D_n[s]$.

\subsection{Holonomic systems for the Laurent coefficients of $f_+^\lambda\varphi$}

Let $\lambda_0$ be an arbitrary complex number. 
Our purpose is to compute a holonomic system of which each coefficient of 
the Laurent expansion of $f_+^\lambda\varphi$ is a solution. 

Take $m \in \N$ such that $\myRe\lambda_0 + m > 0$. 
Let $b_0(s)$ be the $b$-function of $f$ and $u$. 
We can find a $P_0(s) \in D_n[s]$ such that
\[
 P_0(s)(f^{s+1}\otimes u) = b_0(s)f^s\otimes u
\]
holds in $N$ by, e.g., syzygy computation.   
By using this functional equation, 
we can find a nonzero polynomial $b(s)$ and 
$P(s) \in D_n[s]$ such that
\[
b(\lambda)f_+^{\lambda} = P(\lambda) f_+^{\lambda+m}. 
\]
In fact, we have only to set 
\[
P(s) := P_0(s)P_0(s+1)\cdots P_0(s+m-1),
\quad
b(s) := b_0(s)b_0(s+1)\cdots b_0(s+m-1). 
\]
Factorize $b(s)$ as $b(s) = c(s)(s-\lambda_0)^l$ 
with $c(\lambda_0) \neq 0$. 
Then $f_+^\lambda\varphi$ has a Laurent expansion of the form
\[
 f_+^{\lambda}\varphi = \sum_{k=-l}^\infty (\lambda-\lambda_0)^k \varphi_k
\]
around $\lambda_0$, where  $\varphi_k \in \Db(U)$ is given by
\begin{equation}
\varphi_k 
=  \frac{1}{(l+k)!} \lim_{\lambda\rightarrow \lambda_0}
\left(\frac{\partial}{\partial\lambda}\right)^{l+k}
(c(\lambda)^{-1}P(\lambda)f_+^{\lambda+m}) 
\nonumber\\ 
=\sum_{j=0}^{l+k}Q_{kj} (f_+^{\lambda_0+m}(\log f)^j)
\label{eq:u_k}
\end{equation}
with 
\[
Q_{kj} := \frac{1}{j!(l+k-j)!}
\left[\left(\frac{\partial}{\partial\lambda}\right)^{l+k-j}
(c(\lambda)^{-1}P(\lambda))\right]_{\lambda = \lambda_0}.
\]

Let
\[
\tilde L = \C[x,f^{-1},s]f^s \oplus\C[x,f^{-1},s]f^s\log f 
\oplus \C[x,f^{-1},s]f^s(\log f)^2 \oplus \cdots
\]
be the free $\C[x,f^{-1},s]$-module with a natural structure of 
left $D_n\langle s,\partial_s\rangle$-module. 
Consider the left $D_n[s]$-submodule
\[
N[k] = D_n[s](f^s\otimes u) + D_n[s]((f^s\log f)\otimes u) 
+ \cdots + D_n[s]((f^s(\log f)^k)\otimes u)
\]
of $\tilde L \otimes_{\C[x]}M$. 
For a complex number $\lambda_0$, set 
\[
N_{\lambda_0}[k] = N[k]/(s-\lambda_0)N[k].  
\]
Let us first give an algorithm to compute the structure of $N[k]$. 

\begin{prp}
\label{prp:fslog}
Let $G_0$ be a set of generators of the annihilator 
$\Ann_{D_n[s]}(f^{s}\otimes u) = J \cap D_n[s]$.  
Let $e_1 = (1,0,\dots,0)$, $\cdots$, $e_{k+1} = (0,\dots,0,1)$ be 
the canonical basis of $\Z^{k+1}$. 
For each $Q(s) \in G_0$ and an integer $j$ with $0 \leq j \leq k$, 
set
\[
Q^{(j)}(s) := \sum_{i=0}^j \binom{j}{i} 
\frac{\partial^{j-i}Q(s)}{\partial s^{j-i}}e_{i+1}
\in (D_n[s])^{k+1}. 
\]
Let $J_k$ be the left $D_n[s]$-submodule of $(D_n[s])^{k+1}$ generated by 
$G_1 := \{Q^{(j)}(s)(\lambda_0) \mid Q(s) \in G_0,\,0 \leq j \leq k\}$.
Then $(D_n[s])^{k+1}/J_k$ is isomorphic to $N[k]$. 
\end{prp}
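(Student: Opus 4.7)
The plan is to define the natural surjective $D_n[s]$-linear map
$\phi : (D_n[s])^{k+1} \to N[k]$ by $\phi(e_{i+1}) = (f^s(\log f)^i)\otimes u$
for $i = 0,1,\dots,k$, and then prove that $\ker\phi = J_k$. Surjectivity is
immediate from the definition of $N[k]$, so the content is the identification
of $\ker\phi$ with the left submodule generated by $G_1$.

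For the inclusion $J_k \subseteq \ker\phi$, I would enrich the module structure
on $\tilde L$ by the formal derivation $\partial_s$ determined by
$\partial_s(s)=1$, $\partial_s(f^s)=f^s\log f$, and $\partial_s(g)=0$ for
$g \in \C[x,f^{-1}]$. One checks that $\partial_s$ commutes with the action of
$D_n$, so $\tilde L$, and hence $\tilde L\otimes_{\C[x]}M$, becomes a left
$D_n[s,\partial_s]$-module in which $\partial_s^i(f^s\otimes u)
= (f^s(\log f)^i)\otimes u$. The standard commutation
$\partial_s^j\,Q(s) = \sum_{i=0}^j \binom{j}{i}
(\partial^{j-i}Q/\partial s^{j-i})\,\partial_s^i$ then yields, for each
$Q(s)\in G_0$,
\[
0 \;=\; \partial_s^j\bigl(Q(s)(f^s\otimes u)\bigr)
\;=\; \sum_{i=0}^j \binom{j}{i}\frac{\partial^{j-i}Q}{\partial s^{j-i}}(s)\,
\bigl((f^s(\log f)^i)\otimes u\bigr)
\;=\; \phi(Q^{(j)}(s)),
\]
so every generator of $J_k$ lies in $\ker\phi$.

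For the reverse inclusion I would induct on $k$. The base case $k=0$ is exactly
the hypothesis that $G_0$ generates $\Ann_{D_n[s]}(f^s\otimes u)$. For the
inductive step, take $(P_0,\dots,P_k)\in\ker\phi$. The isomorphism
$N[k]/N[k-1]\cong N$ established in the proof of the Laurent-coefficient
theorem sends the class of $(f^s(\log f)^k)\otimes u$ to $f^s\otimes u$, so
reducing the identity $\sum_{i=0}^k P_i(s)((f^s(\log f)^i)\otimes u)=0$
modulo $N[k-1]$ forces $P_k(s)(f^s\otimes u)=0$ in $N$, i.e.\
$P_k(s)\in \Ann_{D_n[s]}(f^s\otimes u)$. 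Writing
$P_k = \sum_\alpha R_\alpha Q_\alpha$ with $Q_\alpha\in G_0$, the element
$\sum_\alpha R_\alpha Q_\alpha^{(k)}$ lies in $J_k$ and has top coordinate
$P_k$. Subtracting it from $(P_0,\dots,P_k)$ kills the last coordinate and
leaves an element $(P_0',\dots,P_{k-1}',0)\in\ker\phi$, whose truncation
$(P_0',\dots,P_{k-1}')\in (D_n[s])^k$ lies in the kernel of the analogous map
onto $N[k-1]$. The induction hypothesis places it in $J_{k-1}$, which sits
inside $J_k$ via the obvious inclusion $(D_n[s])^k\hookrightarrow (D_n[s])^{k+1}$,
since the generators of $J_{k-1}$ are the $Q^{(j)}$ with $j\leq k-1$, already
among those of $J_k$. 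This closes the induction.

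The principal technical point is the well-definedness and injectivity of the
map $N\to N[k]/N[k-1]$ that sends $f^s\otimes u$ to the class of
$(f^s(\log f)^k)\otimes u$; this is what allows us to isolate the top
coefficient $P_k$ modulo $N[k-1]$, and it rests on the congruence
$P(s)((f^s(\log f)^k)\otimes u)\equiv (f^{s-m}(\log f)^k)\otimes
(f^{m-s}P(s)f^s)u \pmod{N[k-1]}$ together with Lemma \ref{lemma:saturation},
both invoked in the proof of the Laurent-coefficient theorem. Once this
identification is granted, everything else is algebraic bookkeeping.
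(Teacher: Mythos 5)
Your proof is correct and follows essentially the same route as the paper's: differentiate the relations $Q(s)(f^s\otimes u)=0$ with respect to $s$ to get $J_k\subseteq\ker\varpi$, then use the isomorphism $N[k]/N[k-1]\cong N$ to peel off the top component and induct on $k$ for the reverse inclusion. Your step of first writing $P_k=\sum_\alpha R_\alpha Q_\alpha$ with $Q_\alpha\in G_0$ and subtracting $\sum_\alpha R_\alpha Q_\alpha^{(k)}$ is in fact slightly more careful than the paper's shorthand, which subtracts ``$Q_k^{(k)}(s)$'' even though the component $Q_k$ need not itself belong to $G_0$.
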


\begin{proof}
Let $\varpi : (D_n[s])^{k+1} \rightarrow N[k]$ be the canonical 
surjection.  
Let $Q(s)$ belong to $G_0$. 
Differentiating the equation $Q(s)(f^s\otimes u)=0$ in $N[k]$ 
with respect to $s$, one gets 
\[
\sum_{i=0}^j \binom{j}{i} 
\frac{\partial^{j-i}Q(s)}{\partial s^{j-i}}((f^s(\log f)^i) \otimes u)
= 0. 
\]
Hence $J_k$ is contained in the kernel of $\varpi$. 
Conversely, assume that 
\[
\vec{Q}(s) = (Q_0(s),Q_1(s),\dots,Q_k(s))
\]
belongs to the kernel of $\varpi$. This implies 
$Q_k(s)(f^s\otimes u) = 0$ since $N[k]/N[k-1]$ is isomorphic to 
$N = D_n[s](f^s\otimes u)$. Hence $\vec{Q}(s) - Q_k^{(k)}(s)$ 
belongs to the kernel of $\varpi$, the last component of which is zero. 
We conclude that $\vec{Q}(s)$ belongs to $J_k$ 
by induction.
\end{proof}

Thus we have 
\[
N_{\lambda_0}[k] = (D_n)^{k+1}/J_k|_{s=\lambda_0}, 
\qquad J_k|_{s=\lambda_0} := \{Q(\lambda_0) \mid Q(s) \in J_k\}.
\]
Set 
\[
w := \sum_{j=0}^{l+k} Q_{kj}((f^{\lambda_0+m}(\log f)^j)\otimes u), 
\qquad
M_{k} := D_nw.
\]
Then we have
\[
Pw = 0 \quad\Leftrightarrow\quad 
P(Q_0,Q_1,\dots,Q_{l+k}) \in J_{l+k}|_{s=\lambda_0+m}. 
\]
Thus we can find a set of generators of $\Ann_{D_n}w$  by computation of 
syzygy or intersection. 
As was shown in \S 2.4, $\varphi_k$ is a solution of 
the holonomic system $M_k$.

\subsection{Difference equations for the local zeta function}

In the sequel, we assume that $\varphi$ is a locally integrable 
function on $\R^n$. 
As we have seen so far, $f_+^\lambda\varphi \in \Fsc(\R^n)$ is 
a solution of the holonomic $D_{n+1}$-module $D_{n+1}/J$. 
Hence if the local zeta function 
$Z(\lambda) := \int_{\R^n} f_+^\lambda \varphi \,dx$ is well-defined, 
e.g., if $\varphi$ has compact support, or else is smooth on $\R^n$ 
with all its derivatives 
rapidly decreasing on the set $\{x \in \R^n \mid f(x) \geq 0\}$, 
then $Z(\lambda)$ is a solution of the integral module
\[
D_{n+1}/(J + \partial_{x_1}D_{n+1} + \cdots + \partial_{x_n}D_{n+1})
\]
of $D_{n+1}/J$, which is a holonomic module over $D_1 = \C\langle t,\partial_t \rangle$. 
This $D_1$-module can be computed by the integration algorithm which is the 
`Fourier transform' 
of the restriction algorithm given in \cite{OT2} 
(see \cite{OT1} for the integration algorithm). 
Then by Mellin transform we obtain linear 
difference equations for $Z(\lambda)$. 
Thus we get

\begin{thm}
Under the above assumptions, 
$Z(\lambda)$ satisfies a non-trivial linear difference equation 
with polynomial coefficients in $\lambda$.  
\end{thm}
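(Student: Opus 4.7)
The plan is to integrate the holonomic $D_{n+1}$-module $D_{n+1}/J$ over $x_1,\dots,x_n$ so as to produce a holonomic $D_1$-module in $(t,\partial_t)$, and then push a nonzero annihilator of the class of $1$ across the injective Mellin transform $\mu$ to obtain a nontrivial difference operator in $\lambda$.

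Concretely, I would first introduce the integration module
\[
 N_{\int} := D_{n+1}/(J + \partial_{x_1}D_{n+1} + \cdots + \partial_{x_n}D_{n+1}),
\]
viewed as a left $D_1 = \C\langle t,\partial_t\rangle$-module via left multiplication; this is well defined because $t$ and $\partial_t$ commute with every $\partial_{x_j}$. Kashiwara's theorem on the preservation of holonomicity under direct image, or equivalently the correctness of the integration algorithm of \cite{OT1} (the ``Fourier transform'' of the restriction algorithm of \cite{OT2}), guarantees that $N_{\int}$ is holonomic over $D_1$, so the class of $1$ in $N_{\int}$ is annihilated by some nonzero $P(t,\partial_t) \in D_1$.

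Next I would verify that $Z(\lambda)$ is a solution of $N_{\int}$ in the sense that the assignment of $Z(\lambda)$ to the class of $1$ extends to a $D_1$-linear map from $N_{\int}$ into the space of meromorphic functions of $\lambda$, where $D_1$ acts on the target through $\mu$ (i.e.\ $t$ as $E_s$ and $\partial_t$ as $-sE_s^{-1}$). Given any $P \in J$, the identity $P(f_+^\lambda\varphi) = 0$ in $\Fsc(\R^n)$ has already been established; integrating over $\R^n$ and invoking the compact support or rapid decrease hypothesis on $\varphi$ allows me to discard, via integration by parts, every term of the form $\partial_{x_j}(\cdots)$, so every representative of an element of $J + \sum_j \partial_{x_j}D_{n+1}$ contributes zero to $Z(\lambda)$. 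Since the operators in $D_1 \subset D_{n+1}$ commute with the $x$-integration and act on $f_+^\lambda\varphi$ through $\mu$, they may be pulled outside the integral and act on $Z(\lambda)$ as the corresponding difference-differential operators. Applying this compatibility to the $P$ produced by holonomicity, $\mu(P)$ annihilates $Z(\lambda)$; multiplying by a suitable power of $E_s$ to clear negative shifts yields the desired nontrivial element of $\C[s][E_s]$, which is nonzero by injectivity of $\mu$.

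The main obstacle I anticipate is bookkeeping the interplay of analytic continuation and integration: $f_+^\lambda\varphi$ is a priori a distribution only for $\myRe\lambda > 0$, and one must verify that integrating the identity $P(f_+^\lambda\varphi)=0$ over $\R^n$ remains valid after meromorphic continuation and is compatible with the Mellin action of $D_1$ on $Z(\lambda)$. The support/decay hypothesis is calibrated precisely to make the integration by parts unconditional for $\myRe\lambda$ large, after which both sides of the resulting identity are meromorphic in $\lambda$ and coincide everywhere by uniqueness of analytic continuation. Once this compatibility is in place, the holonomicity of $N_{\int}$ supplies the nonzero annihilator at no extra cost, and the theorem follows.
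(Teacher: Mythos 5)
Your proposal is correct and follows essentially the same route as the paper: the paper likewise passes to the integration module $D_{n+1}/(J+\partial_{x_1}D_{n+1}+\cdots+\partial_{x_n}D_{n+1})$, invokes preservation of holonomicity under integration (computed via the integration algorithm of \cite{OT1}), and converts a nonzero $D_1$-annihilator of the class of $1$ into a difference operator for $Z(\lambda)$ via the Mellin transform $\mu$. The details you supply --- the integration by parts under the support/decay hypothesis showing $Z(\lambda)$ solves the integral module, clearing negative shifts by a power of $E_s$, nontriviality from injectivity of $\mu$, and the analytic-continuation bookkeeping --- are precisely the steps the paper leaves implicit.
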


\begin{exa}
$\Gamma(\lambda +1) = \int_0^\infty x^{\lambda}e^{-x}\,dx 
= \int_{-\infty}^\infty x_+^{\lambda}e^{-x}\,dx$ 
satisfies the difference equation
\[
(E_\lambda - (\lambda+1))\Gamma(\lambda+1) = 0,
\]
where $E_\lambda : \lambda \mapsto \lambda+1$ is the shift operator. 
\end{exa}

\subsection{Examples}

Let us present some examples computed by using algorithms introduced so far 
and their implementation in the computer algebra system Risa/Asir. 

\begin{exa}
Set $f = x^3-y^2 \in \R[x,y]$ and $\varphi = 1$. Since the $b$-function of $f$ is 
$b_f(s)=(s+1)(6s+5)(6s+7)$, possible poles of $f_+^\lambda$ are 
$-1-\nu$, $-5/6-\nu$, $-6/7-\nu$ with $\nu\in\N$ and they are at most simple poles. 
The residue $\Res_{\lambda=-1}f_+^\lambda$ is a solution of 
\[
D_2/(D_2(2x\partial_x +3y\partial_y+6)
+ D_2(2y\partial_x + 3x^2\partial_y) + D_2(x^3-y^2)).
\]
$\Res_{\lambda=-5/6}f_+^\lambda$ is a solution of $D_2/(D_2x + D_2y)$. Hence it 
is a constant multiple of the delta function $\delta(x,y)= \delta(x)\delta(y)$. 
$\Res_{\lambda=-7/6}f_+^\lambda$ is a solution of 
$D_2/(D_2x^2 + D_2(x\partial_x+2) + D_2y)$. Hence it is a constant multiple 
of $\delta'(x)\delta(y)$.
\end{exa}

\begin{exa}
Set $f = x^3-y^2$ and $\varphi(x,y) = \exp(-x^2-y^2)$. 
Then $\varphi$  is a solution of a holonomic system
$M := D_2/(D_2(\partial_x+2x) + D_2(\partial_y+2y))$ on $\R^2$,
which is $f$-saturated since it is a simple $D_2$-module. 
The generalized $b$-function for $f$ and $u := [1] \in M$ is 
$b_f(s) = (s+1)(6s+5)(6s+7)$. 
The local zeta function 
$Z(\lambda) := \int_{\R^2} f_+^\lambda\varphi\,dxdy$ is annihilated by
the difference operator
\begin{align*}&
32E_s^4 + 16(4s+13)E_s^3 
-4(s+3)(27s^2+154s+211)E_s^2 
\\&
-6(s+2)(s+3)(36s^2+162s+173)E_s
-3(s+1)(s+2)(s+3)(6s+5)(6s+13),
\end{align*}
where $s$ is an indeterminate corresponding to $\lambda$. 
From this we see that $-7/6$ is not a pole of $Z(\lambda)$. 
\end{exa}

\begin{exa}
Set $\varphi(x) = \exp(-x - 1/x)$ for $x>0$ and $\varphi(x) = 0$ for $x \leq 0$. 
Then $\varphi(x)$ belongs to the space $\Ssc(\R)$ of rapidly decreasing functions 
on $\R$ and satisfies a holonomic system 
\[
M := 
D_1/D_1(x^2\partial_x + x^2-1),
\]
which is $x$-saturated. 
The generalized $b$-function for $f = x$ and $u = [1] \in M$ 
is $s+1$. 
The local zeta function 
$Z(\lambda) := \int_{\R} x_+^\lambda\varphi(x)\,dx$ is entire (i.e., 
without poles) and  
satisfies a  difference equation
\[
(E_\lambda^2 - (\lambda+2)E_\lambda - 1)Z(\lambda) = 0. 
\]
This can also be deduced by integration by parts. 
\end{exa}

\begin{exa}
Set $\varphi_1(x) = \exp(-x - 1/x)$ for $x>0$ and $\varphi_1(x) = 0$ for $x \leq 0$. 
Set $\varphi(x,y) = \varphi_1(x)e^{-y}$. 
Then $\varphi$ satisfies a holonomic system
\[
M := D_2/(D_2(x^2\partial_x + x^2-1) + D_2(\partial_y+1)). 
\] 
The generalized $b$-function for $f := y^3-x^2$ and $u = [1] \in M$ 
is $s+1$. Moreover, we can confirm that $M$ is $f$-saturated by using 
the localization algorithm in \cite{OTW}. 
The local zeta function 
$Z(\lambda) := \int_{\R^2} f_+^\lambda\varphi\,dxdy$  
is well-defined since $f(x,y) < 0$ if $y < 0$. It is annihilated by a 
 difference operator of the form
\begin{align*}&
E_s^{11} + a_{10}(s)E_s^{10} + \cdots + a_1(s)E_s + a_0(s), 
\\&
a_0(s) = c(s+1)(s+2)(s+3)(s+4)(s+5)(s+6)(s+7)(s+8)(s+9),
\end{align*}
where $c$ is a positive rational number and $a_1(s),\dots,a_{10}(s)$ are 
polynomials of $s$ with rational coefficients.
Possible poles of $f_+^\lambda\varphi$ are the negative integers. 
For example, $-1$ is at most a simple pole of $f_+^\lambda\varphi$ 
and $\Res_{\lambda=-1} f_+^\lambda\varphi$ is a solution of a holonomic system
\[
D_2/(D_2(3x^2\partial_x + 2xy\partial_y + 3x^2 + (2y+6)x - 3) + D_2(y^3-x^2)). 
\]

\end{exa}

\begin{exa}
Set $f = x^3-y^2z^2$. The $b$-function of $f$ is  
$(s+1)(3s+4)(3s+5)(6s+5)^2(6s+7)^2$. 
For example, its maximum root $-5/6$ is at most a pole of order $2$ of $f_+^\lambda$. 
Let 
\[
f_+^\lambda = \Bigl(\lambda + \frac56\Bigr)^{-2}\varphi_{-2} 
+ \Bigl(\lambda + \frac56\Bigr)^{-1}\varphi_{-1} + \varphi_0 + \cdots 
\]
be the Laurent expansion. Then $\varphi_{-2}$ satisfies
\[
x\varphi_{-2} = y \varphi_{-2} = z\varphi_{-2} = 0.
\]
Hence $\varphi_{-2}$ is a constant multiple of $\delta(x,y)$. 
On the other hand, $\varphi_{-1}$ satisfies a holonomic system
\[
x\varphi_{-1} = (y\partial_y - z\partial_z)\varphi_{-1} 
= yz\varphi_{-1} = (z^2\partial_z-z)\varphi_{-1} = 0.
\]
\end{exa}

\end{document}